\documentclass[ 11pt,a4paper]{amsart}

\usepackage{amsfonts}
\usepackage{amsmath}
\usepackage{amsmath,amsthm, amssymb}
\usepackage{amscd}
\usepackage{verbatim}
\usepackage{graphicx}
\usepackage{comment}
\usepackage{extarrows}
\usepackage[colorinlistoftodos,textsize=small]{todonotes}
\usepackage[font=small]{caption}

\DeclareRobustCommand{\SkipTocEntry}[4]{}

\newtheorem{thm}{Theorem}[section]

\newtheorem{cor}[thm]{Corollary}
\newtheorem{lem}[thm]{Lemma}

\newtheorem*{thmn}{Theorem \ref{thm:homeo}}

\theoremstyle{definition}
\newtheorem{defi}[thm]{Definition}
\newtheorem{ex}[thm]{Example}

\newtheorem{rem}[thm]{Remark}

\newtheorem{ques}[thm]{Question}

\numberwithin{equation}{section}

\def\RR{\mathbb R}

\def\NN{\mathbb N}

\def\Fbot{V(F)^\bot}
\def\Fo{F^\circ}

\DeclareMathOperator{\ri}{ri}
\DeclareMathOperator{\aff}{aff}
\DeclareMathOperator{\cl}{cl}
\DeclareMathOperator{\conv}{conv}
\DeclareMathOperator{\inte}{int}
\DeclareMathOperator{\dimm}{dim}

\def\delrel{\del_{\text{rel}}}
\def\Bo{B^{\circ}}
\def\Co{C^\circ}
\def\ra{\rightarrow}

\def\lora{\longrightarrow}
\def\coloneqq{\mathrel{\mathop:}=}
\def\del{\partial}
\def\normm{\lVert \cdot \rVert}

\def\xhor{\overline{X}^{hor}}
\def\delhor{\del_{\text{hor}}}

\newcommand\norm[1]{
	\lVert #1 \rVert 
}

\newcommand\fracnorm[1]{
	\frac{#1}{\norm{#1}}
}
\newcommand\pair[2]{
	\left \langle #1 | #2 \right \rangle
}

\newcommand\pairl[2]{
	\left \langle \left. #1 \right | #2 \right \rangle
}

\newcommand\pairr[2]{
	\left \langle #1 \left| #2 \right. \right \rangle
}

\begin{document}

\title[Polyhedral Horofunction Compactification]{Polyhedral Horofunction Compactification as A Polyhedral Ball}

\author[L. Ji]{Lizhen Ji}
\address{Lizhen Ji\\Department of Mathematics, University of Michigan, USA}

\author[A. Schilling]{Anna-Sofie Schilling}
\address{Anna-Sofie Schilling\\Department of Mathematics, University of Heidelberg, Germany}

\date{\today}

\begin{abstract}
In this paper we answer positively a question raised by Kapovich and Leeb in a paper titled ``Finsler bordifications of symmetric and certain locally symmetric spaces". Specifically, we show that  for a finite-dimensional vector space with a polyhedral norm, its horofunction compactification is homeomorphic to the dual unit ball of the norm by an explicit map. To prove this, we  establish  a criterion for converging sequences in the horofunction compactification and generalize the basic notion of the moment map in the theory of toric varieties. 
\end{abstract}
 
 \maketitle
 
\tableofcontents


\section{Introduction}
The horofunction compactification was introduced by Gromov \cite[\S 1.2]{gr} in 1981 and can be constructed for any proper metric space. Due to its generality, this method of compactifying is used in many areas: Rieffel \cite{ri} was interested in a specific subset of the horofunctions, the so-called Busemann points, in order to describe the topology of group \(C^*\)-algebras with Dirac operators coming from length functions. More geometrically, Alessandrini, Liu, Papadopoulos and Su \cite{alessandrini} determined the horofunction compactification with respect to the arc metric for Teichmüller spaces of surfaces with boundary. Several works deal with the horofunction compactification of symmetric spaces of non-compact type and compare it to other well-known compactifications of these spaces, see \cite{hsw}, \cite{kl} and \cite{lemmens}. Many of these works are based on a consideration of the compactification of finite-dimensional vector spaces, whereby they differ in their methods. In \cite{cks}, Ciobotaru, Kramer and Schwer  determine the horofunction compactification of finite-dimensional vector spaces with asymmetric metric in order to study horofunctions on Euclidean buildings in a follow-up paper. Hence it is appropriate to take a closer look at the horofunction compactification of finite-dimensional vector spaces. For this setting, Walsh \cite{wa2} explicitly described the set of Busemann points, the specific subset of horofunctions that was already considered by Rieffel, and gave a criterion to determine whether all horofunctions are Busemann points. 

In this paper we focus on the case of a finite-dimensional normed vector space \(X\) with a polyhedral norm, meaning that the unit ball of the norm is a convex polytope containing the origin in its interior. For the same setting, Karlsson, Metz and Noskov \cite{KMN} use the blow-up-and-shift-technique to characterize the horofunction compactification, whereas Ciobotaru, Kramer and Schwer \cite{cks} use ultrapowers. An advantage of polyhedral norms is that in this case all horofunctions are Busemann points, and that, by Walsh's result in \cite{wa2}, the set of  horofunctions can be determined explicitly. To see the underlying topology of the set of horofunctions, the second author of this paper used Walsh's description to characterize converging sequences in the horofunction compactification of finite-dimensional vector spaces with polyhedral norms in her diploma thesis \cite{sch14}. This result is also given as Theorem \ref{thm:characterization} in this paper and shows a deep connection between the horofunction compactification and the shape of the dual unit ball, a convex polytope in the dual space \(X^*\). This connection was also noticed by Kapovich and Leeb \cite{kl}, who posed the following question: 

\begin{ques}\cite[Quest. 6.18]{kl}
    Let \((X, \normm)\) be a finite-dimensional real vector space with polyhedral norm. Is it true that the horofunction compactification $\xhor$ corresponding to this norm is homeomorphic to the closed dual unit ball and respects the face structure?
    \footnote{The original question states: ''Suppose that $\normm$ is a polyhedral norm on a finite-dimensional real vector space $V$. Is it true that the horoclosure $\overline{V}$ of $V$ with respect to this norm, with its natural stratification,  is homeomorphic to the closed unit ball for the dual norm? ``}
\end{ques}

The main purpose of this paper is to give a positive answer to this question and to give an explicit formula for the homeomorphism, see Theorem \ref{thm:homeo} below. \\

Let us give some details on the construction of the homeomorphism. We identify our finite-dimensional normed vector space $X$ with $\RR^m$ in order to use the Euclidean inner product to define orthogonal projections in $X$. Following  Walsh \cite[Thm. 1.1 and 1.2]{wa2}, we describe the horoboundary as a set of real-valued functions $h_{E,p}$ on the dual space $X^*$ that depend on proper faces $E$ of the dual unit ball $\Bo$ and  points $p$ in a certain subspace of $X$. More details on the functions \(h_{E,p}\), including an explicit description, can be found in Section \ref{sec:h_ep}. These maps $h_{E,p}$ are then used to define the homeomorphism between the horofunction compactification $\overline{X}^{hor}$ of $X$ and the dual unit ball $\Bo$. The map for this homeomorphism was inspired by the moment map from the theory of toric varieties. See \cite[p. 82]{fu} for a definition of the moment map and a similar statement to ours about the map.  The basic construction is as follows: given an $m$-dimensional convex polytope $C$ with vertices $\{c_1, \ldots, c_r\}$ in the dual vector space \(X^*\), we define a bijective map $m^C$ from the  vector space $X$ into the interior of \(C\) by  
\begin{align*}
      m^C: X &\lora \inte(C),\\
      x &\longmapsto \sum_{i = 1}^r \frac{e^{-\langle c_i  | x \rangle}}{\sum_{k =1 }^r e^{-\langle c_k  | x \rangle}} c_i.
\end{align*}

Each exponent in this expression consists of the negative dual pairing between a vertex \(c_i\) of the polytope in \(X^*\) and the point \(x \in X\). The fractions are positive and add up to one. Thus they form the weights for a convex combination of the vertices of \(C\). Depending on the position of \(x\) in relation to \(c_i\), the weighting by \(e^{-\langle c_i  | x \rangle}\) is larger or smaller and the image of \(x\) is "pulled" more or less strongly in the direction of \(c_i\). Therefore this map is open and bijective onto the interior of $C$. More about it can be found in Section \ref{sec:m^c}.

A combination of several maps of the form $m^C$, where now $C \subseteq \Bo$ is a face of the dual unit ball $\Bo$, is used to define the homeomorphism \(m\) between the compactification \(\xhor\) and the dual unit ball \(\Bo\): 

\begin{thmn} 
	Let $(X, \normm)$ be a finite-dimensional normed space with polyhedral norm. Let $B \subset X$ be the unit ball associated to $\normm$ and $\Bo \subset X^*$ its dual polytope. Then the horofunction compactification $\overline{X}^{hor}$ of $X$ with respect to the norm $\normm$ is homeomorphic to $\Bo$ via the map
	\begin{align*}
		m: \xhor \lora \Bo, \hspace{0.8cm} \begin{cases} \hfill	X \ni x  &\longmapsto m^{\Bo}(x), \\
		\delhor X \ni h_{E,p}   &\longmapsto m^E(p). \end{cases} 
	\end{align*}
\end{thmn}

Let us shortly explain what this map does. The space $X$ is mapped into the interior of $\Bo$ by the map \(m^{\Bo}: X \ra \inte(\Bo)\). Horofunctions \(h_{E,p}\) associated to the face $E \subset \Bo$  from the boundary of \(\xhor\) are mapped into  the interior of $E$ by the map \(m^E\) applied to the point \(p \in X\). 
This special structure of the maps \(m^{\Bo}\) and \(m^E\) (with \(E \subset \del\Bo\) extreme) guarantees compatibility with the topology and thus the continuity of the map.

The proof of Theorem \ref{thm:homeo} is based on the above mentioned characterization of sequences converging to the horoboundary given in Theorem \ref{thm:characterization}.
The characterization given in this paper focuses on the case where the norm is polyhedral. A more general version of this result, including non-polyhedral norms with smooth boundaries and general norms for two-dimensional cases, can be found in \cite{sch21}.

 \subsubsection*{Outline of the paper}
After some preliminaries in Section \ref{sec:preliminaries} we prove Theorem \ref{thm:characterization}, the characterization of converging sequences, in Section \ref{sec:horofunction}.  To visualize the strong dependence on the direction and shape of the sequence from the faces of the unit ball and its dual we give some illustrative examples in Section \ref{sec:examples}. By combining this characterization and the above explicit map, we prove Theorem \ref{thm:homeo} in the last section.

\subsubsection*{Acknowledgment}

Both authors would like to express their gratitude to the referees of the previous version of this paper and to Pat Boland for their careful reading and constructive suggestions. The first author acknowledges support from NSF grants DMS 1107452, 1107263, 1107367 GEometric structures And Representation varieties (the GEAR Network) and partial support from Simons Fellowship (grant \#305526) and the Simons grant \#353785. The second author was supported by the European Research Council under ERC-consolidator grant 614733. 


\section{Preliminaries} \label{sec:preliminaries}


\subsection{Notations}

In the following, $(X,\normm)$ always denotes an $m$-dimensional normed real vector space where we allow the norm to be asymmetric, that is, \(\lVert -x \rVert \neq \lVert x \rVert \) for \(x \in X\) is possible. Additionally, we require the norm to be polyhedral, that is, the unit ball $B$ associated to the norm $\normm$ is an $m$-dimensional polytope containing the origin in its interior.  
Let $\langle \cdot | \cdot \rangle$ denote the dual pairing of the dual space $X^*$ and $X$. 
 
\subsection{Cones, subspaces and convex hulls}

We remind you of basic definitions and standardize the notation. For visualization see the sketch in Figure \ref{fig:cones_subspaces_notations}.

For any subset $F \subset X$ let $V(F) \subset X$ be the linear subspace generated by $F$, that is, the smallest linear subspace of \(X\)  containing $F$, and let $\Fbot$ denote its orthogonal complement with respect to the Euclidean inner product obtained by identifying $X$ with $\RR^m$. The orthogonal projection of $x \in X$ to these two subspaces will be denoted by $x_F$ for the projection to $V(F)$ and $x^F$ for the projection to $\Fbot$. 
The affine hull of a set $A \subset X$ is the smallest affine subspace in $X$ containing $A$ and it is denoted by $\aff(A)$.  With it we define the dimension of the set \(A\) to be \(\dimm(A) \coloneqq \dimm(\aff(A))\). 

The cone \(K_F\) over $F$ is the set 
\[
	K_F \coloneqq \{ x \in X | \exists \alpha \geq 0, f \in F \text{ such that } x = \alpha f\}.
\] 
Note that all cones \(K_F\) are pointed, that is, they contain the origin \(\{0\}\). Let \(H\) be an affine hyperplane intersecting the cone \(K_F\) but not containing the origin. Then the subset \(K_F \cap V_H^c\) is called a \emph{truncated cone}, where \(V_H^c\) denotes the half-space defined by \(H\) which does not contain the origin (the upper index "c" stands for "complement"). 

\begin{figure}[h!]
	\centering
	\includegraphics[scale=1]{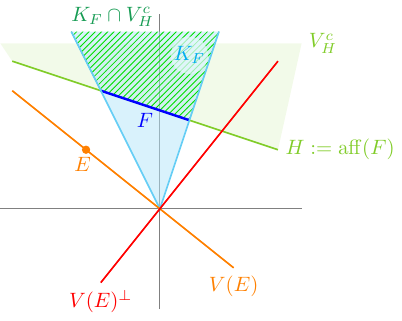}
	\caption{Schematic picture to illustrate the notations} 	\label{fig:cones_subspaces_notations}
\end{figure}

Given a Set \(S \subset X\), the convex hull over \(S\) can be described as the set of all convex combinations with elements from \(S\): 
\[
	\conv(S) \coloneqq \left\{\left. \sum_{i \in \NN}^{k} \alpha_i s_i \ \right|\  k\in \NN, s_i \in S, \alpha_i \geq 0, \sum_{i = 1}^k \alpha_i = 1 \right\}.
\]
Whenever a compact convex subset $C$ is given as the convex hull of over finitely many points, $C = \conv\{c_1, \ldots, c_k\}$, we will assume this subset of points to be minimal, that is, $\conv\{c_1, \ldots, c_k\} \neq \conv\{c_1, \ldots, c_{j-1}, c_{j+1}, \ldots, c_k\}$ for all $j = 1, \ldots, k$. This means that each point $c_j$ is a proper vertex  of $C$. 

\begin{rem}
	We could also have taken the quotient $X/V(F)$ instead of $\Fbot$, but since the orthogonal complement is more geometric, we use the complement $V(F)^\bot$.
\end{rem}

\subsection{Some convex analysis}

\begin{defi}\label{defi:dual}
	Let $B$ be the unit ball of our norm $\normm$. Then the \emph{dual unit ball} $\Bo$  is defined as the polar of $B$:
	\[
		\Bo \coloneqq \{ y \in X^* \ | \langle y | x \rangle \geq -1 \ \forall x \in B \}. 
	\]
	The \emph{dual norm} $\normm^\circ$ is the norm which has $\Bo$ as its unit ball. 
\end{defi}

Some authors define the dual unit ball by the condition \(\pair{y}{x} \leq 1 \) with \(y \in X^*\) and \(x \in B\). Both definitions are valid but result in different pictures and a careful treatment of signs in the calculations is necessary. For more details on polars and polyhedral convex sets see for example \cite{be} or \cite[\S 19]{ro}.

\begin{rem}
	Every $m$-dimensional polytope $C \subset X$ containing the origin in its interior defines a norm $\normm_C$ on $X$ via
	\begin{align*}
		\lVert x \rVert_C \coloneqq \inf\{\alpha > 0 | x \in \alpha C\}
	\end{align*}
	for all $x \in X$. 
\end{rem}

There are two ways to describe a bounded convex polytope, either as the convex hull of a finite set of points or as the intersection of finitely many half-spaces. This leads to the following duality in the description of a polytope $C$, which contains the origin in its interior, and of its dual polytope $C^\circ$: 

Let $C = \conv\{c_1, \ldots, c_r\}$ be an $m$-dimensional polytope whose interior contains the origin and which is given as the convex hull of a finite set of points. Then each point $c_i$ (\(i \in \{1, \ldots, r\}\)) defines a hyperplane $H_i \subset X^*$  such that $\langle H_i | c_i \rangle = -1$, that is, $\langle h_i | c_i \rangle = -1$ for all $h_i \in H_i$. Let $V_i  \coloneqq V_{H_i} \subset X^*$ be the closed half-space bounded by $H_i$ which contains the origin. Then \(\pair{v_i}{c_i} \geq -1\) for all \(v_i \in V_i\). Thus
\begin{align*}
	C^\circ &= \bigcap_{i = 1}^r V_i\\
	&= \{ y \in X^* \ | \ \langle y | c_i \rangle \geq -1 \ \forall i= 1, \ldots, r\}.
\end{align*} 
  
As $C$ is convex and contains the origin we have $(C^\circ)^\circ = C$. Consequently it is also easy to describe $C^\circ$ as the convex hull of a finite set of points when $C$ is given as the intersection of certain half-spaces $V_i$.  

\begin{defi}
	A \emph{$k$-face} of a polytope $C = \bigcap_{i = 1}^r V_i \subset X$ is a $k$-dimensional non-empty subset of $X$ which is the intersection of $C$ with one or more hyperplanes $H_i$ that bound $V_i$.  An $(m-1)$-dimensional face is also called a \emph{facet}. 
\end{defi}

Note that all faces of a polytope \(C = \conv\{c_1, \ldots, c_r\} = \bigcap_{i = 1}^r V_i\) are closed sets. According to the previous discussion, such a face can not only be described as the non-empty intersection of hyperplanes, but also as the convex hull of a certain subset of the vertices \(\{c_1, \ldots, c_r\}\) of \(C\). 

\begin{ex} \label{ex:dualL1}
	We consider $\RR^2$ equipped with the $L^1$-norm. Then $B$ is the square with vertices 
	\[
		c_1 = (1,0), \ c_2 = (0,1), \ c_3 = (-1,0), \ c_4 = (0,-1).
	\]
	Each vertex defines a hyperplane \(H_i\) (for \(i \in \{1, \ldots, 4\}\)) by the condition \(\pair{H_i}{c_i} = -1\). We have, for example,
	\[
		H_1 = \left\{ \left . (-1, y) \right| \ y \in \RR \right\},
	\] 
	and similar expressions for the other three hyperplanes. 
	
	\begin{figure}[h!]
		\includegraphics{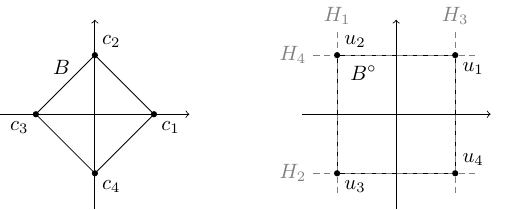}
		\caption{The unit ball $B$ and its dual $\Bo$ as in Example \ref{ex:dualL1} }\label{fig: example_B_Bo}
	\end{figure}
	
	Looking at Figure \ref{fig: example_B_Bo}, we see that the dual unit ball is a square corresponding to the $L^\infty$-norm with vertices 
	\[
		u_1 = (1,1), \ u_2 = (-1,1), \ u_3 = (-1, -1), \ u_4 = (1,-1).
	\]
\end{ex}

\begin{rem}[\cite{hsw}, Lemma 3.8]
	There is a one-to-one correspondence between the faces of $B$ and those of $\Bo$. Indeed, let $F \subset B$ be a $k$-face of the \(m\)-dimensional polyhedral unit ball $B$. Then there is a unique ($m-1-k$)-face $E \subset \Bo$ of the dual unit ball defined by the equation $\langle E | F \rangle = -1$, that is, $\langle e | f \rangle = -1$ for all $e \in E$ and $f \in F$. This face is called the \emph{dual face} of $F$ and often denoted by $F^\circ$. As \(B\) and \(\Bo\) are polyhedral unit balls, we have \((F^\circ)^\circ\). Note that 
	\begin{align*} \label{dimensionformula}
		\dim F + \dim F^\circ = m-1.
	\end{align*}
\end{rem}

One feature of the relation of a face \(F\) and its dual \(E\) is the property that all points in \(E\) have the same dual pairing with a given point in \(V(F)\). The precise statement is as follows: 

\begin{lem}\label{lem:samepairing}
	Let $E \subset \Bo \subset X^*$ be a face and $F \subset B \subset X$ its dual. Then there is a point $t \in V(F)^* \subseteq X^*$ such that 
	\[
		\langle e | q \rangle = \langle t | q \rangle
	\]
	for all $e \in E $ and $q \in V(F)$. 
\end{lem}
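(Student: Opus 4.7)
The plan is to observe that the defining relation of the dual face forces every element of $E$ to restrict to the same linear functional on the subspace $V(F)$, so one may simply take $t$ to be any chosen element of $E$ (viewed as an element of $X^*$).

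More concretely, first I would invoke the definition of the dual face: for every $e \in E$ and every $f \in F$ one has $\langle e \mid f \rangle = -1$. Fix once and for all some $e_0 \in E$ and set $t \coloneqq e_0 \in X^*$. Then for any other $e \in E$ and any $f \in F$, subtracting gives
\[
\langle e - t \mid f \rangle = \langle e \mid f \rangle - \langle e_0 \mid f \rangle = (-1) - (-1) = 0.
\]

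Next I would promote this from $F$ to $V(F)$ by linearity. Since $V(F)$ is by definition the linear span of $F$, every $q \in V(F)$ can be written as a finite linear combination $q = \sum_i \lambda_i f_i$ with $f_i \in F$ and $\lambda_i \in \R$. Then
\[
\langle e - t \mid q \rangle = \sum_i \lambda_i \langle e - t \mid f_i \rangle = 0,
\]
so $\langle e \mid q \rangle = \langle t \mid q \rangle$ for all $e \in E$ and $q \in V(F)$, which is exactly the claim.

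There is no real obstacle here; the only conceptual point worth flagging is that $V(F)$ is the span of $F$ rather than $F$ itself, so one must bridge between the affine condition $\langle e \mid f \rangle = -1$ on $F$ and a linear statement on $V(F)$. Forming the difference $e - e_0$ converts the affine constraint into a linear one that vanishes on $F$, after which linearity does the rest.
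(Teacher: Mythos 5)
Your proof is correct and takes essentially the same route as the paper: the paper's one-line proof simply asserts that $E$ is contained in an affine translate $(\Fbot)^* + t$ of the annihilator of $V(F)$, which is precisely the statement that all elements of $E$ restrict to the same functional on $V(F)$, and you verify that fact directly from the defining relation $\langle e \mid f \rangle = -1$ of the dual face and then pass from $F$ to its span by linearity. Your version is a bit more explicit than the paper's, and the choice $t = e_0 \in E$ is perfectly admissible since the lemma only requires $t \in X^*$.
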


\begin{proof}
	The statement follows from the fact\footnote{For a reference see for example the proof of \cite[Lemma 3.8]{hsw}.} that for every face $E \subset \Bo$ there is a point $t \in V(F)^*$ such that $E$ lies in the affine subspace $\left(\Fbot\right)^* + t$, in other words, \(\aff(E) = \left(V(F)^\bot\right)^* + t\). That is, for all $e \in E$ there is an $f^\bot \in \left(\Fbot\right)^*$ such that $e = f^\bot + t$.
\end{proof}

\begin{defi}
	 The \emph{relative interior} $\ri(A)$ of $A$ is the interior of $A$ within $\aff(A)$. 
	 
	 Similarly, we define the \emph{relative boundary} of $A$ as $\del_{rel}A \coloneqq (\cl A)\setminus (\ri A)$. 
\end{defi}

\begin{lem} \label{lem:pairinginfty}
	Let $C \subset X$ be an $m$-dimensional convex polytope with the origin in its interior and denote by \(C^\circ = \conv\{c_1, \ldots, c_r\} \subset X^*\) its dual polytope. Let $(x_n)_{n \in \NN} \subset X$ be a sequence satisfying the following two properties with respect to some face $F \subset C$ and its dual face $E = F^\circ \subset C^\circ$: 
	\begin{itemize}
		\item The sequence \((x_n)_{n \in \NN}\) lies in \(V(F)\) and diverges: $\lVert x_{n} \rVert_C \ra \infty$.
		\item For $n$ large enough, the sequence $(x_{n})_{n \in \NN}$ lies in the cone $K_F$ and is bounded away from its boundary: \mbox{\(d(x_{n}, \delrel K_F) \ra \infty\)}.
	\end{itemize}
	Then for any pair of vertices $c_E \in E$ and \(c_j \in C^\circ\) the following holds, as $n \lora \infty$:
	\begin{align*}
		\langle c_E - c_j | x_{n} \rangle \lora  
		\left
		\{
			\begin{array}{ll}
				0 & \text{ if } c_j \in E, \\
				-\infty & \text{ if } c_j \notin E .
			\end{array}
		\right.
	\end{align*}
\end{lem}

\begin{proof}
	As $x_{n} \in K_F$ for $n$ large enough, there is an $f_n \in F$ for each large $n$ such that $x_{n} = \lVert x_{n} \rVert_C \cdot f_n$. 
	
	Let both $c_E$ and $c_j$ be vertices of $E$. As $f_n \in F = E^\circ$:
	\[
		\langle c_E - c_j | x_{n} \rangle = \lVert x_{n} \rVert_C \Big ( \langle c_E - c_j | f_n \rangle \Big )  = 0.
	\]
	
	If $c_j \notin E$, then $\langle c_j | f_n \rangle > -1$ while $\langle c_E | f_n \rangle = -1$, hence
	\[
		\langle c_E - c_j | x_{n} \rangle = \lVert x_{n} \rVert_C \Big ( \underbrace{\langle c_E - c_j | f_n \rangle }_{< 0} \Big ) < 0. 
	\] 
	
	If \(\pair{c_j}{f_n}\) is bounded away from \(-1\), then \(\pair{c_E - c_j}{x_{n}} < \gamma < 0\) for some \(\gamma > 0\) and we are done, since \(lVert x_n \rVert \ra \infty\). 
	
	Otherwise, we have \(\pair{c_j}{f_n} \lora -1\). This behavior only occurs if the sequence \((f_n)_{n \in \NN} = \left(\fracnorm{x_{n}}\right)_{n\in\NN}\) converges to a point within a face \(F_0\) in the relative boundary of \(F\), and in addition if \(c_j \in  E_0 \coloneqq \Fo_0\) is a vertex of \(E_0\) but not of \(E\). Note that, by the duality of \(C\) and \(\Co\),  \(E \subsetneq E_0\) is a proper extreme set in the relative boundary of \(E_0\). As \(c_E\) and \(c_j\) are both vertices of \(E_0\), the dual pairing simplifies to 
	\[
        \pair{c_E - c_j}{x_n} = \pairr{c_E - c_j}{x_n^{F_0}},
	\]
	since \(E_0\) is parallel to the subspace \((V(F_0)^\bot)^* \subset X^*\). As \(f_n\) converges to a point in \(F_0\) and the distance \(d(x_n, \delrel K_F)\) is unbounded, the projected sequence \(\left(x_n^{F_0}\right)_n\) also diverges because \(K_{F_0}\) is part of the relative boundary of \(K_F\).  
	
	The idea of the rest of the proof is to project the sequence to the subspaces \(V(F_0)^\bot\) and \((V(F_0)^\bot)^*\), respectively, and to take a shift of \(E_0\) as a new dual unit ball. We will then have a similar setting as before but in lower dimension. Doing this projecting procedure possibly several times, we will finally get the desired convergence. 
	
	Let us now discuss the details of the proof, see also Figure \ref{fig:lem_pairinginfty} for a sketch and to remember notations.

	\begin{figure}[h!]
			\centering
			\includegraphics[scale=0.9]{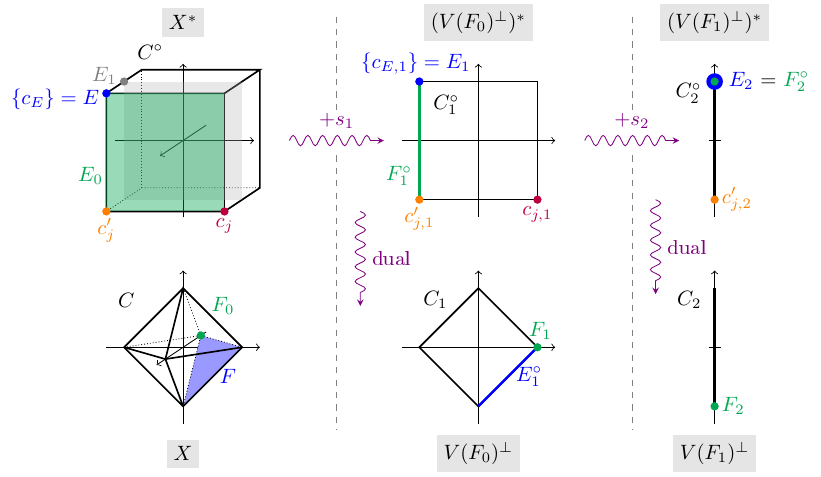}
			\caption{Sketch of the idea of constructing new unit balls.} \label{fig:lem_pairinginfty}
	\end{figure}

	Let \(s_1 \in  X^*\) be a shifting parameter such that 
	\[
        C_1^\circ \coloneqq E_0 + s_1 \subset (V(F_0)^\bot)^*
	\]
    contains the origin in its relative interior. Then \(C_1^\circ\) is a parallel translate of \(E_0\) and has 
    \[
        E_1 \coloneqq E + s_1 \subset (V(F_0)^\bot)^*
    \]
    in its relative boundary. We now only consider the subspaces \((V(F_0)^\bot)^*\) and \(V(F_0)^\bot\). As \(C_1^\circ\) is a compact convex set with the origin in its interior in \((V(F_0)^\bot)^*\),  we now take its dual \(C_1 \coloneqq (C_1^\circ)^\circ \subset X\) in \(V(F_0)^\bot\) as the new unit ball. Thus, we have the following setting now: in the subspaces \(V(F_0)^\bot\) and \((V(F_0)^\bot)^*\) we have the compact convex sets \(C_1\) and \(C_1^\circ\) as the unit and dual unit ball, respectively, and an diverging sequence \(\left(x_n^{F_0}\right)_n \subset V(F_0)^\bot\). The faces of \(C_1^\circ \subset X\) are in one-to-one correspondence with those faces of \(C\) that have \(F_0\) in their relative boundary. The difference of dimensions between corresponding faces of \(C\) and \(C_1\) is \(\dimm(F_0) + 1\). Facets of \(C\) correspond to facets of \(C_1\) and \(\dimm(F_0) + 1\)-dimensional faces of \(C\) become vertices in \(C_1\). Due to this correspondence of faces, the projected sequence \(\left(x_n^{\Fo}\right)_{n \in \NN}\) lives in the cone \(K_{E_1^\circ}\) over the face \(E_1^\circ\) that is dual to the face \(E_1 \subset C_1^\circ\). Note that we consider duality now only within the subspaces \(V(F_0)^\bot\) and \((V(F_0)^\bot)^*\). Additionally, the projected sequence \(\left(x_n^{\Fo}\right)_n\) has unbounded distance to the relative boundary of \(K_{E_1^\circ}\), because the latter consists of projections of faces in the relative boundary of \(K_F\). 
    
    Therefore, we now have the same situation as at the beginning of the proof, but this time with faces \(E_1^\circ\) and \(E_1\) instead of \(F\) and \(E\), and everything happens in a subspace of lower dimension. 
    
    Let \(F_1 \subset C_1\) be the (not necessarily proper) face of \(E_1^\circ\) containing all accumulation points of the sequence \(\left(\fracnorm{x_n^{F_0}}\right)_n\). Denote by  \(c_{E,1} \coloneqq c_E + s_1 \in E_1\) and \(c_{j,1} \coloneqq c_j + s_1 \in C_1^\circ \setminus E_1\) the shifted vertices. If \(c_{j,1} \notin \delrel F_1^\circ \subset C_1^\circ\) (as \(c_{j,1}\) in the middle picture of \ref{fig:lem_pairinginfty}), we know by the first part of the proof that 
    \[
        \pairl{c_E - c_j}{x_n} = \pairr{c_{E,1} - c_{j,1}}{x_n^{F_0}} \lora -\infty.
    \]
    
    If \(c_{j,1} \in \delrel F_1^\circ\) (as \(c_{j,1}'\) in the picture), then we go on again in the same way by projecting (now to \(V(F_1)^\bot\)) and constructing a new unit ball and so on. We continue this procedure as long as \(c_{j,k} \in \delrel F_k^\circ\), with \(k\) denoting the step of iteration. As we lower the dimension of the subspaces in each step, we finally come to the point where the unit ball is one-dimensional with exactly two faces. Then the points \(c_{E,k}, c_{j,k}\) (obtained by several shiftings of \(c_E\) and \(c_j\), respectively), can not be in the relative boundary of the same face any more (because we shift with the same parameter in each step) and thereby the iterating process finally ends. In the last step we therefore have 
    \[
        \pairl{c_E - c_j}{x_n} = \pairr{c_{E,k} - c_{j,k}}{x_n^{F_{k-1}}} \lora -\infty. \qedhere
    \]
\end{proof}

\subsection{The ``pseudo-norm'' $\lvert \cdot \rvert_R$}

In order to be able to introduce the maps defining horofunctions in the next section, we first have to define a so-called ``pseudo-norm'' on \(X\), for more details see \cite[p.5]{wa2}:

\begin{defi}
	Let $R \subset X^*$ be a convex set. For every $x \in X$ define 
	\begin{align*}\label{pseudonorm}
		|x|_R \coloneqq - \inf_{q \in R}\langle q|x\rangle.
	\end{align*}
\end{defi}

Whenever \(R\) is bounded and contains the origin in its relative interior, \(\lvert \cdot \rvert_R\) actually defines a (non-symmetric) norm on \(\aff(R)\) with unit ball \(R^\circ\). Thus, by the duality of the unit balls $B$ and $\Bo$, $\lvert \cdot \rvert_{\Bo}$ is a norm: 
\[
	\lvert \cdot \rvert_{\Bo} = -\inf_{q \in \Bo}\langle q|\cdot\rangle = \lVert \cdot\rVert.
\]  

In the following, we state some technical lemmata about the relation of the pseudo-norm $\lvert \cdot \rvert_R$ and the norm $\normm$. We will use them later in the proof of Theorem \ref{thm:characterization}. 

\begin{lem} \label{lem:minionbdry}
	If $R = \conv\{r_1, \ldots, r_k\}$  is a convex polytope, then 
	\begin{align*}
		\lvert x \rvert_R = - \inf_{i = 1,\ldots, k} \langle r_i | x \rangle.
	\end{align*}
\end{lem}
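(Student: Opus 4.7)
The plan is to reduce the infimum over the polytope $R$ to a minimum over its finite vertex set using the fact that the dual pairing $\langle \cdot \mid x \rangle$ is linear in the first argument, and that a linear function on a convex polytope attains its extrema at vertices. This is a standard convex-analysis fact, and the proof has no real obstacle; the main content is just being explicit about the two inequalities.

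First, I would establish the inequality $|x|_R \ge -\min_{i=1,\ldots,k}\langle r_i \mid x\rangle$ trivially: since each $r_i$ lies in $R$, we have $\inf_{q\in R}\langle q\mid x\rangle \le \langle r_i\mid x\rangle$ for every $i$, hence $\inf_{q\in R}\langle q\mid x\rangle \le \min_i \langle r_i\mid x\rangle$, and negating gives the inequality.

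For the reverse direction, I would take an arbitrary $q \in R = \conv\{r_1,\ldots,r_k\}$ and write $q = \sum_{i=1}^k \lambda_i r_i$ with $\lambda_i \ge 0$ and $\sum_i \lambda_i = 1$. Linearity of $\langle \cdot \mid x \rangle$ in the first slot then gives
\[
\langle q \mid x\rangle = \sum_{i=1}^k \lambda_i \langle r_i \mid x\rangle \;\ge\; \min_{i=1,\ldots,k} \langle r_i \mid x\rangle \cdot \sum_{i=1}^k \lambda_i \;=\; \min_{i=1,\ldots,k} \langle r_i \mid x\rangle.
\]
Taking the infimum over $q \in R$ preserves this bound, so $\inf_{q\in R}\langle q\mid x\rangle \ge \min_i \langle r_i\mid x\rangle$, which after negation is the inequality complementary to the one above.

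Combining the two inequalities yields $-\inf_{q\in R}\langle q\mid x\rangle = -\min_{i=1,\ldots,k}\langle r_i\mid x\rangle$, and since the minimum over a finite set equals the infimum over that set, this is exactly the claim. Note that the minimum is attained (so ``$\inf$'' could be replaced by ``$\min$'' on the right-hand side), which incidentally also shows that the infimum defining $|x|_R$ is attained at some vertex of $R$ — a fact that will likely be useful later.
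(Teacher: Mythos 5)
Your proof is correct, but it takes a different route from the paper's. The paper argues top-down through the face lattice: it observes that the affine function $q \mapsto \langle q \mid x\rangle$ on the compact convex set $R$ must attain its extrema on the relative boundary (else the derivative would vanish at an interior extremum, forcing the affine function to be constant), and then repeats this descent on the boundary faces until it reaches the vertices. You instead argue bottom-up from the definition of the convex hull: one inequality is immediate because the vertices lie in $R$, and the other follows by writing an arbitrary $q \in R$ as a convex combination $\sum_i \lambda_i r_i$ and using linearity of $\langle \cdot \mid x\rangle$ together with $\sum_i \lambda_i = 1$. Your version is more elementary and self-contained --- it avoids the paper's slightly informal appeal to derivatives and the implicit induction on the boundary strata --- and it has the small bonus of showing explicitly that the infimum in the definition of $\lvert x \rvert_R$ is attained at a vertex. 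The paper's argument, on the other hand, would apply verbatim to a polytope presented as a bounded intersection of half-spaces without first extracting its vertex set. Both are standard and both are fine here.
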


\begin{proof}
	For any $x \in X$, define a function $f_x: R \lora \RR$ via $f_x(q) = \langle q|x\rangle $. As $R$ is compact and $f_x$ is continuous and affine, $f_x$ takes its minimum and maximum on the boundary of $R$. Indeed, if the extrema would only lie in the interior of $R$, the derivative would be $0$ there. As $f_x$ is affine, it had then to be constant in contradiction to the assumption that it takes its extrema not on the boundary. As the boundary of $R$ is the finite union of several polyhedral convex sets, we can conclude in the same way that $f_x$ takes its minimum and maximum on the vertices $r_1, \ldots, r_k$.
\end{proof}

\begin{lem} \label{lem:||E||=Ei}
	Let $F$ be a proper face of $B$ and $E \coloneqq F^\circ$ its dual face with vertices $\{e_1, \ldots, e_k\}$. Their dual facets are denoted by $F_i \coloneqq \{e_i\}^\circ \subset B$ for all $i = 1, \ldots, k$. If $F \subset B$ is not a facet, all the facets $F_i$ contain $F$ in their relative boundary. If $F$ is a facet, then \(E = \{e_1\}\) is a point (\(k=1\)) and $F = F_1$. Let $x \in K_{F_j}$ be a point for some (not necessarily unique) $j \in \{1, \ldots, k\}$ . Then
	\[
		|x|_E = \norm{x}.
	\]
\end{lem}

\begin{proof}
	Because of the duality  $F_j = \{e_j\}^\circ$ and as $\fracnorm{x} \in F_j$, we know that 
	\[
		\left \langle e_j \left |\fracnorm{x}\right. \right \rangle  = -1, \hspace{1cm}	  \left \langle e_i \left |\fracnorm{x} \right. \right \rangle  \geq -1 \hspace{3mm} \text{ for all } i \neq j.
	\]  
	Using Lemma \ref{lem:minionbdry}, we compute
	\begin{align*}
		|x|_E &= -\inf_{q \in E}\langle q|x\rangle  \\
		&= - \inf_{i = 1, \ldots, k}\langle e_i|x\rangle \\
		&= - \langle e_j|x\rangle \\
		&= \norm{x}. \qedhere
	\end{align*}
\end{proof}

\begin{lem} \label{lem:||E=||B+||E}
      Let $F$ be a proper face of $B$ and $E = F^\circ$ its dual face. Let \(x \in K_F\) be a point. Then, for all $p \in X$, 
      \[ 
	  	|x + p|_E = \norm{x} + |p|_E.
      \]
\end{lem}

\begin{proof}
      As $\frac{x}{\lVert x \rVert} \in F$, we know that  $\langle q|x\rangle  = - \lVert x \rVert$ for all $ q \in E$. With this we obtain
      \begin{align*}
	  |x + p|_E &= -\inf_{q \in E}\langle q|x+p\rangle \\ 
	  &= - \inf_{q \in E}[\langle q|x\rangle  + \langle q|p\rangle ]\\
	  &= \lVert x \rVert - \inf_{q \in E}\langle q|p\rangle \\
	  &= \lVert x \rVert + |p|_E. \qedhere
      \end{align*}
\end{proof}

\subsection{The maps $h_{E,p}$} \label{sec:h_ep}

We are now prepared to introduce real-valued functions \(h_{E,p}\)on $X$, which will later turn out to be the horofunctions of $X$ with respect to our norm $\normm$. For every proper face $E \subset \Bo$ and every point $p \in V(E^\circ)^\bot$ we define the function 
\begin{align*}
      h_{E,p}: X &\lora \RR, \\
      y &\longmapsto \lvert p - y \rvert_E - \lvert p \rvert_E. 
\end{align*}

We could also take $p \in X$ to define $h_{E,p}$, but the following lemma shows that this restriction gives us uniqueness since only the part in $V(E^\circ)^\bot$ contributes to $h_{E,p}$:

\begin{lem} \label{lem:p^Fonly}
      Let $E \subset \Bo$ be a face and $F \subset B$ its dual. Then for all $p, y \in X$
      \[
	  	\lvert p - y \rvert_E - \lvert p \rvert_E = \lvert p^F - y \rvert_E - \lvert p^F \rvert_E,
      \]
      where as usual $p^F$ denotes the projection of $p$ to $V(F)^\bot = V(E^\circ)^\bot$. 
\end{lem}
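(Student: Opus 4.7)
The plan is to use Lemma \ref{lem:samepairing} to split off the $V(F)$-component of $p$ as an $E$-independent contribution that cancels when we take the difference $|p-y|_E - |p|_E$.

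First I would write $p = p_F + p^F$ using the orthogonal decomposition $X = V(F) \oplus V(F)^\bot$, so that $p - y = (p^F - y) + p_F$ with $p_F \in V(F)$. By Lemma \ref{lem:samepairing}, there exists a fixed $t \in X^*$ such that $\langle e | q \rangle = \langle t | q \rangle$ for every $e \in E$ and every $q \in V(F)$. Applying this to $q = p_F$ gives, for every $e \in E$,
\begin{align*}
\langle e | p - y \rangle &= \langle e | p^F - y \rangle + \langle t | p_F \rangle, \\
\langle e | p \rangle &= \langle e | p^F \rangle + \langle t | p_F \rangle.
\end{align*}

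Next I would take the infimum over $e \in E$ of each equation. Since the term $\langle t | p_F \rangle$ is independent of $e$, it pulls out of the infimum, and therefore
\[
|p - y|_E = |p^F - y|_E - \langle t | p_F \rangle, \qquad |p|_E = |p^F|_E - \langle t | p_F \rangle.
\]
Subtracting these two identities, the $\langle t | p_F \rangle$ terms cancel and we obtain exactly the claimed equality
\[
|p - y|_E - |p|_E = |p^F - y|_E - |p^F|_E.
\]

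The only thing that requires care is the invocation of Lemma \ref{lem:samepairing}: it supplies a single functional $t$ that captures the pairing of \emph{all} $e \in E$ against vectors in $V(F)$, which is precisely what allows $\langle t | p_F \rangle$ to come out of the infimum as a constant. Apart from this observation the argument is a direct bilinearity calculation and poses no real obstacle.
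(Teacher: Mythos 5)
Your proof is correct and follows essentially the same route as the paper's: both use the decomposition $p = p_F + p^F$ and invoke Lemma \ref{lem:samepairing} to see that $\langle e \,|\, p_F\rangle = \langle t \,|\, p_F\rangle$ is constant in $e$, so it pulls out of the infimum and cancels in the difference. The only cosmetic difference is that the paper first replaces the infimum over $E$ by an infimum over the vertices of $E$ (via Lemma \ref{lem:minionbdry}), a reduction your argument shows is unnecessary here.
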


\begin{proof}
      Let $\{e_1, \ldots, e_k\}$ be the vertices of $E$ and $\{f_1, \ldots, f_l\}$ those of $F$. Then, by Lemma \ref{lem:samepairing}, there is a $t \in X^*$ such that $\pair{e_i }{ q } = \pair{ t }{ q }$ for all $q \in V(F)$ and all $i \in \{1, \ldots, k \}$. Therefore  
      \begin{align*}
		  \lvert p - y \rvert_E - \lvert p \rvert_E &= -\inf_{i = 1, \ldots, k} \pairl{e_i}{p - y} + \inf_{j = 1, \ldots, k}\pairl{ e_j }{ p } \\
		  &= -\inf_{i} \left[\pairr{ e_i }{ p^F  - y } + \pairr{ e_i }{ p_F } \right] + \inf_{j} \left[\pairr{ e_j }{ p^F } + \pairr{ e_j }{ p_F } \right] \\
		  &= - \inf_{i} \pairr{ e_i }{ p^F  - y }  + \inf_{j} \pairr{ e_j }{ p^F } \\
		  &= \lvert p^F - y \rvert_E - \lvert p^F \rvert_E, 
      \end{align*}
      where the infimum is always taken over $i,j  \in \{1, \ldots, k\}$. 
\end{proof}

\begin{lem} \label{lem:hE1_neq_hE2}
      Let $E_1 \neq E_2$ be two faces of $\Bo$. Then there are no points $p_1, p_2 \in X$ such that $h_{E_1, p_1} = h_{E_2, p_2}$. 
\end{lem}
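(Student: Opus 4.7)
The plan is to recover the face $E$ from the function $h_{E,p}$ as an asymptotic invariant that does not depend on the parameter $p$. Specifically, for each $y \in X$ and $t > 0$, Lemma \ref{lem:minionbdry} gives $|p - ty|_E = \sup_{e \in E}\langle e \,|\, ty - p\rangle$, so that
\[
    \frac{h_{E,p}(ty)}{t} \;=\; \sup_{e \in E}\left\langle e \,\Big|\, y - \tfrac{p}{t}\right\rangle \;-\; \frac{|p|_E}{t}.
\]
Since $E \subset \Bo$ is compact and the dual pairing is jointly continuous, the right-hand side converges as $t \to \infty$ to $\sigma_E(y) := \sup_{e \in E}\langle e \,|\, y\rangle$, the \emph{support function} of $E$ evaluated at $y$. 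In particular, this limit is independent of $p$.

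Now suppose, toward contradiction, that $h_{E_1, p_1} = h_{E_2, p_2}$ as functions on $X$. Taking the asymptotic slope in every direction $y \in X$ yields $\sigma_{E_1}(y) = \sigma_{E_2}(y)$ for all $y \in X$. A classical fact from convex analysis states that the support function of a closed convex set $K \subset X^*$ determines $K$ completely, through the recovery formula
\[
    K \;=\; \{\,e \in X^* \,:\, \langle e \,|\, y\rangle \leq \sigma_K(y)\ \text{ for all } y \in X\,\}.
\]
Applied to $E_1$ and $E_2$, which are already closed and convex as faces of $\Bo$, this forces $E_1 = E_2$, contradicting the hypothesis $E_1 \neq E_2$.

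The only step requiring any care is the uniform convergence $\sup_e \langle e \,|\, y - p/t\rangle \to \sup_e \langle e \,|\, y\rangle$, which follows immediately from compactness of $E$ via the estimate $\bigl|\sup_{e \in E}\langle e \,|\, y - p/t\rangle - \sup_{e \in E}\langle e \,|\, y\rangle\bigr| \leq \max_{e \in E}|\langle e \,|\, p\rangle|/t$, and vanishes as $t \to \infty$. I do not anticipate any serious obstacle: the whole argument reduces to the one-line observation that $h_{E,p}(ty)/t$ admits a well-defined limit which is the support function of $E$, and hence encodes $E$ uniquely.
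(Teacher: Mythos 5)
Your proof is correct, and it takes a genuinely different route from the paper's. The paper argues by constructing an explicit test point: assuming $\dim E_1 \geq \dim E_2$, it picks a vertex $u$ of $E_1$ not lying in $E_2$ (such a vertex exists, else $E_1 \subseteq E_2$ would force $E_1 = E_2$), passes to the dual facet $F = \{u\}^\circ \subset B$, chooses $y$ with $p_1 - y,\, p_2 - y \in K_F$, and evaluates both functions there; since $u$ is the unique type of vertex achieving the minimal pairing $-1$ on $F$ and $u \notin E_2$, this yields the strict inequality $h_{E_2,p_2}(y) < h_{E_1,p_1}(y)$. You instead recover $E$ from $h_{E,p}$ as a recession invariant: $h_{E,p}(ty)/t \to \sup_{e \in E}\langle e \,|\, y\rangle$, the support function of $E$, with the $p$-dependence washed out at rate $O(1/t)$, and a compact convex subset of $X^*$ is determined by its support function. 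Your computation is right (the displayed identity is really just the definition of $\lvert \cdot \rvert_E$ via $-\inf_{q}\langle q\,|\,x\rangle = \sup_q \langle q\,|\,-x\rangle$ rather than a consequence of Lemma \ref{lem:minionbdry}, which concerns attainment at vertices, but this is cosmetic). Your approach is more conceptual and more general — it needs only that $E$ is compact and convex, not that it is a polytope face, and it avoids the combinatorial selection of $u$ and $y$ — whereas the paper's argument stays elementary and exhibits a concrete point at which the two horofunctions take different values, which is in the same spirit as the test-point constructions used elsewhere in the paper (e.g.\ in the proof of Theorem \ref{thm:characterization}).
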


\begin{proof}
	Without loss of generality, let $\dim E_1 \geq \dim E_2$. Let $u \coloneqq u_1 \in E_1 \setminus E_2$ be a vertex of $E_1$ and $F = \{u\}^\circ \subset B$ its dual facet. By $u_j$, $j = 2, \ldots, r$ we denote the other vertices of $\Bo$. Now we assume there are $p_1, p_2 \in X$ such that $h_{E_1, p_1} = h_{E_2, p_2}$. Then, as $h_{E_1, p_1}(p_1) = \lvert p_1 - p_1 \rvert_{E_1} - \lvert p_1 \rvert_{E_1} = - \lvert p_1 \rvert_{E_1}$, we have
	\begin{align} \label{eq:proof_h_2_neq_h_1}
		- \lvert p_1 \rvert_{E_1} = h_{E_1, p_1}(p_1) =  h_{E_2, p_2}(p_1) = \lvert p_2 - p_1 \rvert_{E_2} - \lvert p_2 \rvert_{E_2}. 
	\end{align}
	Now take $y \in X$ such that $p_1-y, \ p_2-y \in \inte(K_F)$. Since $F$ is a facet of $B$, we have $\dimm(X) = \dimm(K_F)$ and, consequently, we can always find some $y$ big enough satisfying this condition. Then $\fracnorm{p_1 - y} \in \ri(F)$ and 
	\begin{align*}
	  \left \langle u_j \left | \frac{p_1 - y}{\lVert p_1 - y \rVert} \right. \right \rangle \quad \text{ is } \quad
	        \begin{cases}
		    = -1 & \text{if } j = 1, \text{ i.e. } u_j = u \\
		    > -1 & \text{if } j \geq 2.
	        \end{cases}
	\end{align*}
	Thus, as $u$ is a vertex of $E_1$ but not of $E_2$, we obtain by Lemma \ref{lem:||E||=Ei} (with \(E_1^\circ\) as \(F\)):
	\begin{align*}
		h_{E_1, p_1}(y) = \lvert p_1 - y \rvert_{E_1}  - \lvert p_1 \rvert_{E_1} = \lVert p_1 - y \rVert - \lvert p_1 \rvert_{E_1}
	\end{align*}
	and 
	\begin{align*}
		h_{E_2, p_2}(y) &= -\inf_{u_i \in E_2} \langle u_i | p_2 - y \rangle - \lvert p_2 \rvert_{E_2} \\
		&= -\inf_{u_i \in E_2} \big [ \langle u_i | p_2 - p_1 \rangle + \langle u_i | p_1 - y \rangle \big] - \lvert p_2 \rvert_{E_2} \\
		&< \lVert p_1 - y \rVert + \lvert p_2 - p_1 \rvert_{E_2} - \lvert p_2 \rvert_{E_2} \\
		&\overset{(\ref{eq:proof_h_2_neq_h_1})}{=}\lVert p_1 - y \rVert - \lvert p_1 \rvert_{E_1} = h_{E_1, p_1}(y). 
	\end{align*}
	Hence for every pair $p_1, p_2 \in X$ we have found a point where $h_{E_1, p_1}$ and $h_{E_2, p_2}$ do not coincide, which is a contradiction to our assumption. 
\end{proof}

\section{The horofunction compactification} \label{sec:horofunction}

\subsection{Introduction to horofunctions}

For this general introduction let $(X,d)$ be a quasi-metric space, that is, \((X,d)\) is a metric space without the condition of symmetry, thus $d(x,y) \neq d(y,x)$ for any $x, y \in X$ is possible. We assume the topology on \(X\) to be defined via the symmetrized distance
\[
      d_{sym}(x,y) \coloneqq d(x,y) + d(y,x)
\]
for all $x,y \in X$. 
Let $p_0$ be a basepoint and let $C(X)$, the space of continuous real-valued functions on $X$, be endowed with the topology of uniform convergence on bounded subsets. 
We denote its quotient by constant functions by $\widetilde{C}(X)$. The horofunction compactification of $X$ is a certain embedding of $X$ as an open and dense subset of a certain compact space in  $\widetilde{C}(X)$. To obtain this embedding we define

\begin{equation*}
	\begin{aligned}
		\psi: X &\lora \widetilde{C}(X) \nonumber \\ 
		z &\longmapsto \psi_z = d(\cdot, z) - d(p_0,z).
	\end{aligned}
\end{equation*}
By using the triangle inequality it can be shown that this map is injective and continuous. If X is geodesic, proper with respect to $d_{sym}$ and if $d$ is symmetric with respect to convergence\footnote{Symmetry of $d$ with respect to convergence means that for any sequence $(x_n)_n \subset X$ and $x \in X$ it holds $d(x_n, x) \ra 0$ if and only if $d(x, x_n) \ra 0$.}, then $\psi$ is an embedding and the image $\psi(X) \subset \widetilde{C}(X)$ is relatively compact. For more details of this construction see \cite[p.4 and Prop. 2.2.]{wa1}.

\begin{defi}
      The horofunction boundary of a metric space $X$ is the boundary of the closure of the image of the map $\psi$ in $\widetilde{C}(X)$:
      \[
	  	\delhor (X) \coloneqq \Big(\cl \psi(X) \Big) \setminus \psi(X).
      \]
      Its elements are called \emph{horofunctions}. If the closure $\overline{X}^{hor} \coloneqq X \cup \delhor X$ is compact, it is called the \em{horofunction compactification} of $X$. 
\end{defi}

\begin{rem}\cite{wa1} \
	\begin{enumerate}
		\item The choice of an alternative basepoint $p_0'$ leads to a homeomorphic boundary and compactification.  
		\item All elements of $\cl \psi(X)$ are 1-Lipschitz with respect to $d_{sym}$. 
	\end{enumerate}
\end{rem}

From now on we assume all conditions to be satisfied such that $\psi$ is an embedding with relatively compact image. Identify $X$ with $\psi(X)$. We say that a sequence $(z_n)_{n \in \NN} \subset X$ converges to a horofunction $\xi \in \delhor X$ if the sequence of associated functions compactly converges, that is, it converges uniformly on compact subsets: $\psi_{z_n} \lora \xi$. Rieffel \cite[Lemma 4.5]{ri} showed that there are special sequences that always converge to a horofunction $\xi \in \delhor X$, namely those along so-called almost-geodesics:

\begin{defi}
      A continuous map $\gamma: \RR \lora X$ is called an {\em almost-geodesic} if for every $\varepsilon > 0$ there is an $N \in \NN$ such that for all $ t \geq s \geq N$ 
      \[
	  \lvert d\big(\gamma(0), \gamma(s)\big) + d\big(\gamma(s), \gamma(t)\big) - t \rvert < \varepsilon.
      \]
\end{defi}

\begin{lem}\cite[Lemma 4.5]{ri}
	For every almost geodesic \(\gamma: \RR \lora \NN\) and every sequence \((t_n)_{n \in \NN}\) diverging to \(\infty\), there is a limit 
	\[
		\xi = \lim_{n \ra \infty}\psi_{\gamma(t_n)} \in \delhor X.
	\]
	The limit is independent of the choice of the sequence \((t_n)_n\) and we call it the \emph{limit} of the almost-geodesic \(\gamma\). 
\end{lem}

\begin{defi}
      A horofunction which is the limit of an almost-geodesic is called a \emph{Busemann point}. 
\end{defi}

In general, not all horofunctions have to be Busemann points, and it is an interesting question when this actually happens. In the case of a finite-dimensional vector space with polyhedral norm we know by Walsh \cite[Thm. 1.2]{wa2} that actually all horofunctions are Busemann points. To calculate them, Walsh first defines certain functions \(f_{E,p}\), which depend on proper faces $E \subset \Bo$ and points $p \in X$. They are defined by 
\begin{align} \label{legendre}
	f_{E,p}: \ X^* &\lora [0, \infty], \nonumber \\
	q &\longmapsto f_{E,p}(q) \coloneqq I_E(q) + \langle q | p \rangle - \inf_{y \in E} \langle y | p \rangle,
\end{align}
where the indicator function $I_E(q)$ is $0$ for $q \in E$ and $\infty$ elsewhere. 
The Busemann points are then given as the Legendre-Fenchel-transforms $f_{E,p}^*$ of the function \(f_{E,p}\). Recall that the Legendre-Fenchel-transform $g^*$ of a function $g:X \ra \RR \cup \{\infty\}$ is given by 
\begin{align*}
      g^*: X^* &\lora \RR \cup \{\infty\},\\
      w &\longmapsto \sup_{x \in X} \big(\langle w | x \rangle - g(x) \big).
\end{align*}
More details about Legendre-Fenchel-transforms can be found for example in \cite[\S 7.2]{be}.

The result of Walsh can be stated as follows:

\begin{thm}[\cite{wa2}, Thm. 1.1.]
      Let $(X, \normm)$ be a finite-dimensional vector space with polyhedral norm and let the notation be as above. Then the set of Busemann points is the set 
      \[
	  \{f_{E,p}^* \ |  E \subset \Bo \text{ is a (proper) face, } p \in X\}.
      \]
\end{thm}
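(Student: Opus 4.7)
My plan is to identify $f_{E,p}^*$ with the function $h_{E,p}$ from Section~\ref{sec:h_ep} and then prove the two inclusions separately. The identification is an immediate Legendre--Fenchel computation: since $f_{E,p}$ is infinite outside $E$ and equals $\langle q | p\rangle + |p|_E$ for $q \in E$ (using $|p|_E = -\inf_{y\in E}\langle y | p\rangle$), one obtains
\[
  f_{E,p}^*(y) = \sup_{q\in E}\langle q | y-p\rangle - |p|_E = |p-y|_E - |p|_E = h_{E,p}(y).
\]
Thus it suffices to show that the Busemann points of $X$ are exactly the $h_{E,p}$ for proper faces $E \subset \Bo$ and $p \in X$.

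For the forward direction I would construct an explicit geodesic ray converging to each $h_{E,p}$. Fix such a pair, set $F = E^\circ$, and by Lemma~\ref{lem:p^Fonly} replace $p$ by $p^F \in V(F)^\bot$ without changing $h_{E,p}$. Pick any $f \in \ri(F)$ (so $\lVert f\rVert = 1$) and set $\gamma(t) = tf + p^F$. For $t$ large, $\gamma(t)/\lVert \gamma(t)\rVert$ lies in some facet $F_j \supset F$ of $B$, so Lemma~\ref{||E||=Ei} identifies $\lVert\gamma(t)\rVert$ with $|\gamma(t)|_E$, and then Lemma~\ref{||E=||B+||E} gives $\lVert\gamma(t)\rVert = t + |p^F|_E$ and $\lVert\gamma(t) - y\rVert = t + |p^F - y|_E$ uniformly for $y$ in any fixed bounded set. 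Hence $\psi_{\gamma(t)}(y) \to h_{E,p}(y)$ uniformly on bounded sets, and $\gamma$ being a genuine geodesic ray is in particular an almost-geodesic, so $h_{E,p}$ is a Busemann point.

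For the converse, let $\xi = \lim_n \psi_{\gamma(t_n)}$ for an almost-geodesic $\gamma$ with $t_n \to \infty$. Passing to subsequences, $\gamma(t_n)/\lVert\gamma(t_n)\rVert \to f_\infty$; let $F$ be the unique face of $B$ with $f_\infty \in \ri(F)$ and set $E = F^\circ$. The almost-geodesic condition should force the transverse component $\gamma(t_n)^F \in V(F)^\bot$ to stay bounded, so along a further subsequence $\gamma(t_n)^F \to p \in V(F)^\bot$. Decomposing $\gamma(t_n) = \gamma(t_n)_F + \gamma(t_n)^F$ and applying Lemmas~\ref{||E||=Ei}, \ref{||E=||B+||E}, and~\ref{lem:samepairing} (the last of which replaces $\langle e | \gamma(t_n)_F\rangle$ by a single functional $\langle t | \gamma(t_n)_F\rangle$ independent of $e \in E$) reduces $\psi_{\gamma(t_n)}(y)$ to
\[
  \langle t | y_F\rangle + |\gamma(t_n)^F - y^F|_E - |\gamma(t_n)^F|_E \;\longrightarrow\; \langle t | y_F\rangle + |p - y^F|_E - |p|_E = h_{E,p}(y),
\]
so $\xi = h_{E,p}$; uniqueness of the pair $(E,p)$ then follows from Lemma~\ref{lem:hE1_neq_hE2}.

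The main obstacle is establishing the boundedness of $\gamma(t_n)^F$: a priori this component could drift off to infinity in $V(F)^\bot$ while still $\gamma(t_n)^F/\lVert\gamma(t_n)\rVert \to 0$, which would be compatible with $\gamma(t_n)/\lVert\gamma(t_n)\rVert \to f_\infty \in V(F)$. Ruling this out exploits the near-equality $\lVert a+b\rVert \approx \lVert a\rVert + \lVert b\rVert$ forced by the almost-geodesic inequality $|d(\gamma(0),\gamma(s)) + d(\gamma(s),\gamma(t)) - t| < \varepsilon$, which in a polyhedral norm requires the successive displacements of $\gamma$ to eventually lie in a common cone $K_{F_j}$ up to bounded error; Lemma~\ref{lem:pairinginfty}, via the limit $\langle c_E - c_j | x_{n,F}\rangle \to -\infty$ for $j \notin S_E$, quantifies the ``cost'' of any transverse excursion. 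Converting this heuristic into the required uniform bound on $\gamma(t_n)^F$ is the delicate quantitative step of the argument.
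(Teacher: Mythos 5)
This statement is cited from Walsh \cite[Thm. 1.1]{wa2} and the paper does not prove it; it takes the result as given and uses it via Lemma~\ref{lem:fep-pseudonorm} to identify the Busemann points with the functions $h_{E,p}$. There is therefore no internal proof to compare against, and you are attempting something the paper deliberately outsources.

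On the merits: your identification $f_{E,p}^* = h_{E,p}$ and your forward direction (building the geodesic ray $\gamma(t)=tf+p^F$ with $f\in\ri(F)$ and showing $\psi_{\gamma(t)}\to h_{E,p}$ via Lemmas~\ref{||E||=Ei} and~\ref{||E=||B+||E}) are sound, and this is essentially the easy half of Theorem~\ref{thm:characterization}. The converse, however, has a genuine gap that goes beyond the quantitative bound you flag at the end. You choose $F$ as the unique face with $f_\infty=\lim \gamma(t_n)/\lVert\gamma(t_n)\rVert\in\ri(F)$. This selection rule is wrong: the face associated to the limiting horofunction is \emph{not} in general determined by the limiting direction. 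The paper's Example~\ref{ex:5} is an explicit almost-geodesic counterexample. Take $z_n=(n,\log(2n))$ in $(\R^2,\lVert\cdot\rVert_{L^1})$: one checks $\lVert z_m\rVert + \lVert z_n - z_m\rVert - \lVert z_n\rVert = 0$ for $n>m$, so it is an almost-geodesic; its direction converges to $(1,0)=F_2$, a vertex of $B$, so your rule selects $F=F_2$ and $E=E_2$; but then $z_n^{F_2}=(0,\log(2n))$ is unbounded, and in fact $\psi_{z_n}\to h_{E_1}$ where $E_1=F_1^\circ$ and $F_1$ is the facet, not the vertex. So the ``boundedness of $\gamma(t_n)^F$'' you hope to extract from the almost-geodesic condition simply fails with your choice of $F$.

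The repair is precisely what Lemma~\ref{lem:subsequence} does: start from a facet whose cone contains the sequence, and iteratively intersect with those boundary faces of $K_F$ to which the sequence stays within bounded distance, stopping when the distance of the projected sequence to $\del_{\mathrm{rel}}K_F$ becomes unbounded. That iterative descent, not the limiting direction, yields the face $F$ for which the transverse component is bounded. You would also then need to rule out different subsequences selecting different pairs $(F,p)$, which Lemma~\ref{lem:hE1_neq_hE2} handles, but without the correct face-selection mechanism the argument does not get off the ground.
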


We now show that our previously defined maps $h_{E,p}$ agree with the maps \(f_{E,p}^*\). 

\begin{lem} \label{lem:fep-pseudonorm}
      Let $E$ be a face of $\Bo$ and $p \in V$. Then 
      \[
        f_{E,p}^*(\cdot) = h_{E,p}(\cdot) = |p - \cdot|_E - |p|_E.
      \] 
\end{lem}

\begin{proof}
      By definition we obtain for all $y \in V$:
      \begin{align*}
        f_{E,p}^*(y) &= \sup_{x \in X^*}[\langle x|y\rangle - f_{E,p}(x)]\\
        &= \sup_{x \in X^*}[\langle x|y\rangle - I_E(x) - \langle x|p\rangle + \inf_{q \in E}\langle q|p\rangle]\\
        &= \sup_{x \in E}[\langle x|y-p\rangle] + \inf_{q \in E}\langle q|p\rangle\\
        &= -\inf_{x \in E}(\langle x|p-y\rangle) + \inf_{q \in E}\langle q|p\rangle\\
        &= |p-y|_E - |p|_E. \qedhere
      \end{align*}
\end{proof}

\begin{cor}
      With the notations as in the previous lemma, the following holds: 
      \[
	  f_{E,p}^* = f_{E,p^F}^*.
      \]
\end{cor}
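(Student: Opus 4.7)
The plan is to chain together two results already established in the excerpt. First, I would invoke Lemma \ref{lem:fep-pseudonorm} to rewrite both sides of the claimed equality in terms of the pseudo-norm $\lvert \cdot \rvert_E$: namely,
\[
f_{E,p}^*(y) = \lvert p - y \rvert_E - \lvert p \rvert_E, \qquad f_{E,p^F}^*(y) = \lvert p^F - y \rvert_E - \lvert p^F \rvert_E,
\]
for every $y \in X$. Reducing the corollary to a statement purely about the functions $h_{E,p}$ is the natural move because the projection $p^F$ is defined in geometric, not variational, terms.

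The key step is then Lemma \ref{lem:p^Fonly}, which asserts exactly that
\[
\lvert p - y \rvert_E - \lvert p \rvert_E = \lvert p^F - y \rvert_E - \lvert p^F \rvert_E
\]
for all $p,y \in X$. So the two right-hand sides above agree pointwise, and the corollary follows at once. There is essentially no obstacle: the geometric content (that pairings of elements of $E$ with vectors in $V(F)$ are governed by a single translate $t \in X^*$ by Lemma \ref{lem:samepairing}, so the $V(F)$-component of $p$ contributes a term independent of the index over which the infimum in $\lvert \cdot \rvert_E$ is taken and hence cancels in the difference) has already been absorbed in the proof of Lemma \ref{lem:p^Fonly}.

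In summary, the proof is a two-line deduction: apply Lemma \ref{lem:fep-pseudonorm} to convert $f_{E,p}^*$ and $f_{E,p^F}^*$ into pseudo-norm differences, and then apply Lemma \ref{lem:p^Fonly} to identify them. The only thing worth emphasizing, should the reader want to see it spelled out, is that the decomposition $p = p_F + p^F$ with $p_F \in V(F)$ and $p^F \in V(F)^\bot$ is the orthogonal splitting fixed at the outset (after identifying $X$ with $\R^m$), which is precisely the splitting with respect to which Lemma \ref{lem:p^Fonly} was formulated.
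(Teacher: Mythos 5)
Your proof is correct and follows exactly the paper's route: the paper's one-line proof ("The statement follows directly by Lemma \ref{lem:p^Fonly}") implicitly invokes Lemma \ref{lem:fep-pseudonorm} (the "previous lemma" referenced in the corollary's statement) to rewrite both transforms as pseudo-norm differences, and then applies Lemma \ref{lem:p^Fonly}, which is precisely your two-step chain. You have simply spelled out what the paper leaves implicit.
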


\begin{proof}
      The statement follows directly by Lemma \ref{lem:p^Fonly}.
\end{proof}

In summary we can describe the set of horofunctions as
      \[
	  \del_{hor}X = \left \{h_{E,p} \ |  E \subset \Bo \text{ is a proper face, } p \in V(E^\circ)^\bot \right\}.
      \]

In the following section we describe the topology of $\overline{X}^{hor}$.

\subsection{The characterization theorem}


The main theorem of this section characterizes all sequences converging to a horofunction. It shows the strong relation between the topology of the set of horofunctions and the shape of the dual unit ball, which is the underlying principle of the homeomorphism in Theorem \ref{thm:homeo}. This result is also used in \cite{js} to establish a geometric one-to-one correspondence between the non-negative part of $n$-dimensional projective toric varieties and horofunction compactifications of $\RR^n$ with respect to rational polyhedral norms.

Before we state the theorem to characterize converging sequences, we prove a lemma which already contains the main idea of the characterization (Lemma \ref{lem:subsequence}) and a lemma that simplifies the proof of the main theorem (Lemma \ref{lem:shift_in_cone}). 

\begin{lem} \label{lem:subsequence}
      Let $(x_n)_{n \in \NN}$ be an diverging sequence in $(X, \normm)$ and let $B$ be the polyhedral unit ball associated to $\normm$. Then there is a proper face $F \subset B$ and a point $p \in V(F)^\bot$ such that the sequence $(x_n)_n$ has a subsequence $(x_{n_j})_j$ which satisfies the following properties:
      \begin{itemize} 
        \item[(i)]  the projections $x_{n_j, F}$ lie in $K_{F}$ for all $j \in \NN$.
        \item[(ii)] the distance of the projection \(x_{n_j, F}\) to the relative boundary diverges: $d \left(x_{n_j,F} ,\del_{rel} K_F \right ) \lora \infty$ as $n_j \lora \infty$.
        \item[(iii)] the sequence of orthogonal projection converges: $\lVert x_{n_j}^F - p \rVert \lora 0$ as $n_j \lora \infty$. 
      \end{itemize}
\end{lem}

\begin{proof}
      To find a face \(F\) of $B$ and a subsequence of \((x_n)_n\) satisfying all three properties, we start to look at the facets, the top-dimensional faces. As $B$ is a polyhedral unit ball, it has only finitely many of them and their cones cover the whole vector space $X$. Therefore we find a subsequence of \((x_n)_n\), also denoted by $(x_n)_{n}$, and a facet $F \subset B$ such that $x_n \in K_F$ for all $n$. As \(F\) is a facet, we know that $V(F) = X$ and therefore the projection is the identity and properties \((i)\) and \((iii)\) are satisfied with \(p = 0\). If property \((ii)\) also holds, then we are done. 
      
      So let us assume the distance \(d(x_n, \delrel K_F)\) is bounded for our facet \(F\). Then there is at least one face in the relative boundary $\delrel K_F$ and a subsequence of \((x_n)_n\) at bounded distance to the cone over this one face. We replace the sequence \((x_n)_n\) by this subsequence and denote it by \((x_m)_{m \in M}\) with \(M \subset \NN\). Take the intersection of all faces in $\delrel K_F$ to which this particular subsequence \((x_m)_m\) has bounded distance.  As the  boundary of the cone \(K_F\) is the union of cones all intersecting in the origin, these cones have unbounded increasing distance to each other if they do not have a common subspace. Because only those cones contribute to the intersection that are in bounded distance to \((x_m)_m\), and thereby also to each other, this intersection is not trivial and it is again a cone generated by a face $F_1$ of $B$ with $\dim(F_1) < \dim(F)$. 
      
      Consider now the projections \((x_{m,{F_1}})_m\) and \((x_m^{F_1})_m\) of the sequence \((x_m)_m\) to the subspace $V(F_1)$ and \(V(F_1)^\bot\), respectively. As the distance of \((x_m)_m\) to $V(F_1)$ is bounded by construction, the orthogonally projected sequence $(x_m^{F_1})_m \subset V(F_1)^\bot$ lies within a compact subset and so there is a subsequence $(x_{m_j})_{j \in \NN} \subset (x_m)_{m \in M}$ such that the orthogonal part \(x_{m_j}^{F_1}\) converges to a point  $p_1 \in V(F_1)^\bot$. Thus the sequence \((x_{m_j})_j\) satisfies the third property of the lemma with respect to $F_1$. By the minimality condition coming from the construction of \(F_1\) above, we guarantee that the distance of the projected sequence $(x_{m_j, F_1})_j$ to $\delrel K_{F_1}$ is unbounded, which implies that the  sequence \((x_{m_j})_j\) satisfies property \((ii)\) of the lemma.
      
      \begin{figure}[h!]
      	\includegraphics[scale=0.89]{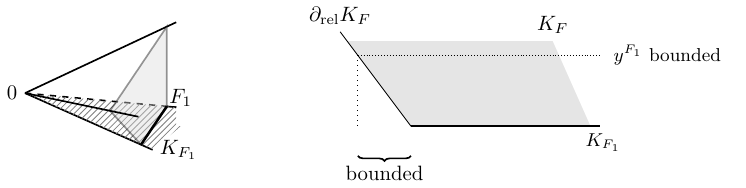}
      	\caption{\textsc{left}: A schematic picture of the cone \(K_F\) with \(K_{F_1}\) in its boundary. \textsc{right}: The projections of points with \(y^{F_1}\) bounded remain within bounded distance to \(\delrel K_{F_1}\) (view from the origin into \(K_F\)).} \label{pic:proof_characterizationthm}
      \end{figure}

      It remains to show that we find a subsequence of $(x_{m_j, F_1})_j$ lying within the cone \(K_{F_1}\). See Figure \ref{pic:proof_characterizationthm} for an idea of the following proof: By the third property of the lemma, the sequence \((x_{m_j}^{F_1})_j\) is bounded, that is, there is some \(b \in \RR\) such that \(\norm{x_{m_j}^{F_1}} \leq b\) for all \(j \in \NN\).  Consider the set \(S \coloneqq \left\{y \in  K_F | \lVert y^{F_1} \rVert \leq b \right\}\). Its orthogonal projection to \(V(F_1)\) remains within bounded distance to \(K_{F_1}\). As \((x_{m_j, F_1})_j\) diverges from \(\delrel K_{F_1}\) by property \((ii)\), there must be infinitely many points of the sequence \((x_{m_j,F_1})_j\) lying within \(K_{F_1}\). We choose the corresponding points of \((x_{m_j})_j\) as our new subsequence, and have thereby found a subsequence of \((x_n)_n\) satisfying all three required properties. 
\end{proof}

\begin{lem} \label{lem:shift_in_cone}
	Let $B \subset (X, \normm)$ be a polyhedral unit ball associated to $\normm$. Let \((x_n)_{n \in \NN} \subset X\) be an diverging sequence such that there is a face \(F \subset B\) with respect to which the following properties hold: 
	\begin{itemize}
		\item[(i)] \(x_n \in K_F\) for \(n\) large enough.
		\item[(ii)] \(d(x_n, \delrel K_F) \lora \infty\) as \(n \lora \infty\). 
	\end{itemize}
	Then for every \(y \in X\) there is an \(N \in \NN\) such that for every \(n \geq N\) the point \(x_n + y\) is contained in the closed cone over a facet which contains \(F\) in its relative boundary. If \(F\) is itself a facet, then \(x_n + y \in K_F\) for \(n\) sufficiently large. 
\end{lem}

\begin{proof}
	As \(x_n \in K_F\) for large \(n\) and the distance of the sequence \((x_n)_n\) to the relative boundary \(\delrel K_F\) goes to infinity, we also have \(x_n + y_F \in K_F\) for possibly some larger \(n\). It remains to show that when adding \(y^F \in V(F)^\bot\) with \(y^F \neq 0\), we obtain a point in the cone over a facet that contains \(F\) in its relative boundary. 
	
	Let \(F_j\) denote some facet of \(B\) that has \(F\) in its relative boundary and let \(S \coloneqq \left\{z \in K_{F_j} \ | \ d(z, K_F) = b\right\}\) be the set of all points in the cone \(K_{F_j}\) that have distance \(b>0\) to the cone \(K_F\) for some fixed \(b \in \RR_+\). Then the orthogonal projection \(S_F\) of \(S\) to the subspace \(V(F)\) is a possibly truncated parallel translate of the cone \(K_F\).  In particular, the relative boundary of the projection \(S_F\) stays within bounded distance to the relative boundary of \(K_F\) as a consequence of triangle inequalities. In other words: take a point of the form \(x + h\), where \(x \in K_F\) and \(h \in V(F)^\bot\) with \(\norm{h} = b \neq 0\),  such that the point \(x + h\) is not contained in the relative interior of \(F_j\). Then this point lies within a \(\beta_j\)-tubular neighborhood of \(\delrel K_F\), where \(\beta_j \in \RR\) only depends on \(F_j\) and \(b\).  
	
	Back to our sequence \((x_n)_n\). As the distance \(d(x_n + y_F, \delrel K_F)\) goes to infinity by property \((ii)\), we know by the discussion in the previous paragraph that the point \(x_n + y = x_n + y_F + y^F\) with \(y^F \neq 0\) has to be inside a cone over a facet which has \(F\) in its relative boundary for \(n\) sufficiently large (note that \(y^F \in V(F)^\bot\) is constant). 
\end{proof}
 

We are now prepared to state and prove the main theorem of this section. 

\begin{thm} \label{thm:characterization}
      Let $B \subset (X, \normm)$ be a convex polyhedral unit ball associated to $\normm$ and $\Bo \subset X^*$ its dual. For any sequence $(z_n)_{n \in \NN} \subset X$ the associated sequence \((\psi_{z_n})_{n \in \NN}\) with $\psi_{z_n} (\cdot) = \lVert z_n - \cdot \rVert - \lVert z_n \rVert$ converges to a horofunction $h_{E,p}$ with $E$ a proper face of $\Bo$ and $p \in V(E^\circ)^\bot$, if and only if the following properties are satisfied with respect to the proper face $F = E^\circ \subset B$: 
      \begin{itemize}
	  \item[(0)]  The sequence diverges: $\lVert z_n \rVert \lora \infty$ as $n \lora \infty$.
	  \item[(i)]  The projection of the sequence \((z_n)_n\) to $V(F)$ lies in the cone over $F$: $ z_{n,F} \in K_{F}$ for $n$ big enough.
	  \item[(ii)] The distance of the projection of the sequence to the relative boundary of the cone diverges: $d \big(z_{n,F}, \delrel K_F \big) \lora \infty$ as $n \lora \infty$.
	  \item[(iii)] The orthogonal projection of the sequence is bounded and converges to $p \in V(F)^\bot$: 
	$\lVert z_n^F - p \rVert \lora 0$ as $n \lora \infty$.  
      \end{itemize}
\end{thm}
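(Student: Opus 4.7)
The statement is an ``if and only if,'' and I treat the two directions separately. Throughout, decompose $z_n = z_{n,F} + z_n^F$ with $z_{n,F} \in V(F)$ and $z_n^F \in V(F)^\bot$.

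\emph{Sufficiency.} Assume (0)--(iii) for $(F,p)$. Condition (iii) gives $z_n^F \to p$; together with (0) the reverse triangle inequality forces $\lVert z_{n,F}\rVert \to \infty$, while (i) keeps $z_{n,F}/\lVert z_{n,F}\rVert \in F$. The plan is to replace $\lVert \cdot\rVert = \lvert \cdot\rvert_{\Bo}$ by the simpler pseudo-norm $\lvert \cdot\rvert_E$ for $n$ large, then exploit the additivity in Lemma \ref{||E=||B+||E}. Fix $y \in X$ and write $\lVert z_n - y\rVert = -\inf_{u_j}\langle u_j \mid z_n - y\rangle$ over the vertices of $\Bo$; fix a reference vertex $u_E \in E$. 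For $u_j \in E$, Lemma \ref{lem:samepairing} yields $\langle u_j - u_E \mid z_{n,F}\rangle = 0$, so $\langle u_j \mid z_n - y\rangle - \langle u_E \mid z_n - y\rangle = \langle u_j - u_E \mid z_n^F - y\rangle$ stays bounded in $n$. For $u_j \notin E$, Lemma \ref{lem:pairinginfty} gives $\langle u_E - u_j \mid z_{n,F}\rangle \to -\infty$, which swamps the bounded perturbation coming from $z_n^F - y$. Hence the infimum is eventually attained by some vertex in $E$, giving $\lVert z_n - y\rVert = \lvert z_n - y\rvert_E$; the same argument with $y = 0$ yields $\lVert z_n\rVert = \lvert z_n\rvert_E$.

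Now Lemma \ref{||E=||B+||E} applied to $x = z_{n,F}$ (so $x/\lVert x\rVert \in F$) with shifts $z_n^F - y$ and $z_n^F$ gives
\begin{equation*}
\lvert z_n - y\rvert_E = \lVert z_{n,F}\rVert + \lvert z_n^F - y\rvert_E, \qquad \lvert z_n\rvert_E = \lVert z_{n,F}\rVert + \lvert z_n^F\rvert_E.
\end{equation*}
Subtracting, $\psi_{z_n}(y) = \lvert z_n^F - y\rvert_E - \lvert z_n^F\rvert_E \to \lvert p - y\rvert_E - \lvert p\rvert_E = h_{E,p}(y)$ by (iii) and the continuity of $\lvert\cdot\rvert_E$ (piecewise linear in its argument). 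Since $\{\psi_{z_n}\}$ is uniformly $1$-Lipschitz, pointwise convergence upgrades to uniform convergence on bounded subsets, so $\psi_{z_n} \to h_{E,p}$ in $\widetilde{C}(X)$.

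\emph{Necessity.} Conversely, assume $\psi_{z_n} \to h_{E,p}$. If $(z_n)$ had a bounded subsequence, a further convergent subsequence $z_{n_j} \to z^*$ would give $\psi_{z_{n_j}} \to \psi_{z^*} \in \psi(X)$, contradicting $h_{E,p} \in \del_{hor} X$, so (0) holds. For (i)--(iii), pass to any subsequence of $(z_n)$: Lemma \ref{lem:subsequence} furnishes a further subsequence $(z_{n_j})$ that satisfies (i)--(iii) for \emph{some} $(F', p')$ with $F' \subset B$ a proper face and $p' \in V(F')^\bot$. The sufficiency direction gives $\psi_{z_{n_j}} \to h_{F'^\circ, p'}$; uniqueness of the limit forces $h_{F'^\circ, p'} = h_{E,p}$. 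Lemma \ref{lem:hE1_neq_hE2} then forces $F' = F$, and a parallel injectivity on the $p$-slot over $V(F)^\bot$ (from the explicit formula for $h_{E,p}$, arguing just as in Lemma \ref{lem:hE1_neq_hE2}) forces $p' = p$. Hence every subsequence of $(z_n)$ admits a further subsequence verifying (i)--(iii) for the prescribed pair $(F,p)$, and by the usual double-subsequence argument the entire sequence does.

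\emph{Expected obstacle.} The delicate quantitative step is the pairing comparison in the sufficiency direction: the bounded perturbations $\langle u_j - u_E \mid z_n^F - y\rangle$ must be strictly dominated by $\langle u_E - u_j \mid z_{n,F}\rangle \to -\infty$ for every $u_j \notin E$. Condition (ii) is precisely what delivers a divergence rate — not merely a sign — in Lemma \ref{lem:pairinginfty}: the Euclidean distance $d(z_{n,F}, \partial_{\text{rel}} K_F) \to \infty$ keeps $\langle u_j \mid f_n\rangle$ bounded away from $-1$ at a rate commensurate with $\lVert z_{n,F}\rVert$, which is exactly what allows the bounded correction from $z_n^F - y$ to be absorbed.
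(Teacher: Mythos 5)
Your proof is correct, and its overall architecture matches the paper's: Lemma \ref{lem:subsequence} together with the uniqueness Lemma \ref{lem:hE1_neq_hE2} and a double-subsequence argument for necessity, and pointwise convergence upgraded via the $1$-Lipschitz property and properness for sufficiency. The genuine difference is in the mechanics of the sufficiency step. The paper inserts $\pm(z_n^F-p)$, squeezes $(\psi_{z_n}-h_{E,p})(y)$ between $\pm 2\lVert z_n^F-p\rVert$ by the triangle inequality, and reduces $\lVert z_{n,F}+p-y\rVert$ to $\lvert z_{n,F}+p-y\rvert_E$ via Lemma \ref{||E||=Ei}, which requires locating for each $n$ a facet $F_i$ of $B$ that both contains $F$ in its relative boundary and contains the normalized vector $\frac{z_{n,F}+p-y}{\lVert z_{n,F}+p-y\rVert}$; this facet selection is precisely where the paper invokes condition (ii). You instead compare the pairings $\langle u_j \mid z_n - y\rangle$ across all vertices $u_j$ of $\Bo$ directly: Lemma \ref{lem:samepairing} keeps the gap to a reference vertex $u_E$ bounded for $u_j\in E$, Lemma \ref{lem:pairinginfty} sends it to $+\infty$ for $u_j\notin E$, and so the minimizer eventually lies in $S_E$, yielding the exact identity $\psi_{z_n}(y)=\lvert z_n^F-y\rvert_E-\lvert z_n^F\rvert_E$ for $n$ large before passing to the limit via Lemma \ref{||E=||B+||E}. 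This avoids the facet bookkeeping. Your ``expected obstacle'' caveat is exactly the right one and is needed: as written, the proof of Lemma \ref{lem:pairinginfty} only observes that $\langle c_E-c_j\mid f_n\rangle<0$, and $\lVert z_{n,F}\rVert\bigl(\langle u_j\mid f_n\rangle+1\bigr)\to\infty$ genuinely requires the rate supplied by $d(z_{n,F},\del_{\text{rel}}K_F)\to\infty$, since $f_n$ may drift toward $\del_{\text{rel}}F$; both you and the paper route this through condition (ii), just at different points of the argument. The necessity direction is the same as the paper's up to phrasing, and both leave the injectivity of $p\mapsto h_{E,p}$ on $V(F)^\bot$ as a brief variant of Lemma \ref{lem:hE1_neq_hE2}.
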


\begin{proof}
      We first show that $(\psi_{z_n})_{n \in \NN}$ converges to $h_{E,p}$ if all properties are satisfied. Let $(z_n)_{n \in \NN}$ be a sequence satisfying all properties for some face $F$ of $B$ and $p \in V(F)^\bot$. By Lemma \ref{lem:shift_in_cone} we know that for every $y \in X$ and every \(n \in \NN\) sufficiently large, each of the two points \(z_{n,F} + p - y\) and \(z_{n,F} + p\) either lies in the cone over a facet that has \(F\) in its relative boundary or in the cone over \(F\). Therefore we can apply Lemma \ref{lem:||E||=Ei} and Lemma \ref{lem:||E=||B+||E} to compute
      \begin{align*}
	  (\psi_{z_n} &- h_{E,p})(y) = \lVert z_n - y \rVert - \lVert z_n \rVert - h_{E,p}(y)\\
	  &= \lVert z_n^F  - p + z_{n,F} + p - y \rVert - \lVert z_n^F  - p + z_{n,F} + p \rVert - h_{E,p}(y) \\
	  &\leq \lVert z_n^F  - p \rVert + \lVert z_{n,F} + p - y \rVert  + \lVert z_n^F  - p \rVert  - \lVert z_{n,F} + p \rVert - h_{E,p}(y) \\
	  &\overset{\ref{lem:||E||=Ei}}{=} |z_{n,F} + p - y|_E - |z_{n,F} + p|_E - h_{E,p}(y)  + 2 \lVert z_n^F  - p \rVert \\
	  &\overset{\ref{lem:||E=||B+||E}}{=} \lVert z_{n,F} \rVert + |p - y|_E  - \lVert z_{n,F} \rVert - |p|_E - h_{E,p}(y)  + 2 \lVert z_n^F  - p \rVert \\
	  &= |p-y|_E -  |p|_E - h_{E,p}(y)  + 2 \lVert z_n^F  - p \rVert \\	  
	  &\lora 0
      \end{align*}
      by the usual and the reverse triangle inequality. Similarly we get 
      \begin{align*}
	  (\psi_{z_n} &- h_{E,p})(y) = \lVert z_n^F - p + z_{n,F} + p - y \rVert - \lVert z_n^F  - p + z_{n,F} + p \rVert - h_{E,p}(y) \\
	  &\geq -\lVert z_n^F  - p \rVert + \lVert z_{n,F} + p - y \rVert  - \lVert z_n^F - p \rVert - \lVert z_{n,F} + p \rVert - h_{E,p}(y) \\
	   &\lora 0.
      \end{align*}
       Thus we have shown that $\psi_{z_n}(y) \lora h_{E,p}(y)$ for all $y \in X$. As we assume $d_{sym}$ to be proper and because all elements of $\cl\{\psi_z \ | z \in X\}$ are 1-Lipschitz with respect to $d_{sym}$, the pointwise convergence of $\psi_{z_n}$ is equivalent to the uniform convergence on bounded sets, which again is equivalent to the uniform convergence on compact sets in $C(X)$. See for example \cite{wa2} for a reference. 
      Therefore $\psi_{z_n} \lora h_{E,p}$.
           
      For the other direction we have to show that every converging sequence $(z_n)_{n \in \NN} \subset X$ with $\psi_{z_n} \lora h_{E,p}$ for some proper face $E \subset \Bo$ and $p \in V(E^\circ)^\bot$ satisfies the properties of the theorem. The proof is based on Lemma \ref{lem:subsequence}, where we have shown that every diverging sequence has a subsequence fulfilling  properties $(i) - (iii)$ for some proper face $F \subset B$. 
      
      So let $(z_n)_{n \in \NN}$ be a sequence with $\psi_{z_n} \lora h_{E,p}$ and let $F\coloneqq E^\circ$ be the dual face. If the sequence \((z_n)_{n \in \NN}\) were bounded, $\psi_{z_n}$ would stay in the interior of $\psi(X)$ and not converge to a Busemann point in the boundary. Thus, \((z_n)_{n \in \NN}\) diverges and, by Lemma \ref{lem:subsequence}, $(z_n)_{n\in \NN}$ has a subsequence $(z_{n_j})_{j\in \NN}$ satisfying all properties with respect to $F$. We have to show that this subsequence is the entire sequence except for finitely many points. If it was not almost the entire sequence, we could find another (diverging) subsequence $(z_{n_k})_{k \in \NN}$ of \((z_n)_{n \in \NN}\) satisfying all properties of Lemma \ref{lem:subsequence} for some face $F_1 \neq F$ and some point \(p_1 \in V(F_1)^\bot\). Then by the first part of the proof we would have $\psi_{z_{n_k}} \lora h_{E_1,p_1} \neq h_{E,p_1}$ as $E_1 \neq E$ (see Lemma \ref{lem:hE1_neq_hE2}) which is a contradiction. The same argument works if we had a subsequence fulfilling the properties from Lemma \ref{lem:subsequence} for some $p_1 \neq p$ with $p_1 - p \notin V(F)$. If \((z_{n})_{n \in \NN}\) had a subsequence \((z_{n_k})_{k \in \NN}\) which was bounded, \((\psi_{n_k})_{k \in \NN} \subset (\psi_{z_n})_{n \in \NN}\) would not converge to \(h_{E,p}\) as required. 
\end{proof}


\subsection{Examples} \label{sec:examples}
Let us look at some examples to illustrate the assumptions of the theorem and to give the reader some intuition how sequences converge. In all examples below we consider $\RR^2$ equipped with the $L^1$-norm. Its dual is the $L^\infty$-norm as seen in Example \ref{ex:dualL1} before. The unit ball $B$ and its dual $\Bo$ as well as the notation of faces are shown in Figure \ref{fig:B_Bo_examples}. 

\begin{figure}[h!]
      \includegraphics{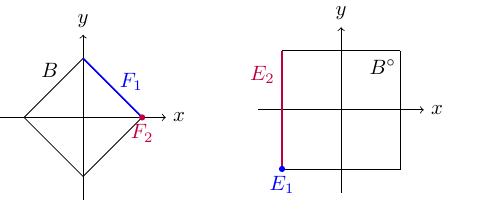}
      \caption{The unit ball $B$ (\textsc{left}) and its dual $\Bo$ (\textsc{right}) with some faces. The face $F_1$ corresponds to the point $E_1$ while $F_2$ is dual to the facet $E_2$.}\label{fig:B_Bo_examples}
\end{figure}

We consider sequences of the form $z_n = (n, f(n)) \in \RR^2$ following functions $f: \RR \ra \RR$. These functions are shown in Figure \ref{fig:function_examples}.

\begin{ex} \label{ex:1}
      For some constant $c \in \RR$ and $n \in \NN$ consider the sequence 
      \[
	  	z_n = (n, c) \in \RR^2.
      \]
       The sequence runs along a line of ``height'' $c$ parallel to the \(x\)-axis. The cone $K_{F_2}$ over the face \(F_2 \subset B\) is the non-negative \(x\)-axis with the origin as its relative boundary.  $V(F_2)^\bot$ then is the \(y\)-axis isomorphic to $\RR$. It is easy to see that all properties of the theorem are satisfied with $F = F_2$ and $p = c$. 
		Note that, as the sequence is parallel to the \(x\)-axis, it is not possible to choose $F_1$ as face here because the distance to the relative boundary \(\delrel K_{F_1}\) is constant. As $F_2^\circ = E_2$ we conclude that $\psi_{z_n} \lora h_{E_2, c}$.
\end{ex}

\begin{ex} \label{ex:2}
      Next we consider sequences $(z_n)_{n \in \NN}$ of the type 
      \[
	  z_n = (n, sn) \in \RR^2
      \]
      with $s > 0$.  Now we choose $F_1$ as our face. As the cone \(K_{F_1}\) is the first quadrant with the non-negative \(x\)- and \(y-\)axis as its relative boundary, the distance \(d(z_n, \delrel K_{F_1})\) diverges because \(s \neq 0\). 
	The dual $E_1$ of $F_1$ is just a point, $E_1 = \{e_1\}$, and, by Equation (\ref{legendre}) on page \pageref{legendre}, it is clear that $h_{E_1,p}(x) = \langle e_1  | x \rangle$ is independent of $p$ for all $x \in \RR^2$. 
	This fits with our conditions above, since $V(F_1) = \RR^2$ and thus $V(F_1)^\bot = \{0\}$. 
	The convergence of $(z_n)_n$ is independent of the value of $s$, all sequences of this type converge to the same horofunction $h_{E_1}$. 
\end{ex}

\begin{ex} \label{ex:3}
      We now take a sequence that lies completely in $K_{F_1}$ but converges to the horofunction associated to the face $F_2$. For $n \in \NN$ let
      \[
	  z_n = \left (n, \frac{1}{n} \right) \in \RR^2 
      \]
      be our sequence. As $(z_n)_n$ approaches the $x$-axis for \(n \lora \infty\) the boundary condition $(ii)$ is not satisfied for $F = F_1$. If we take $F = F_2$ instead, this is not a problem anymore. As not the whole sequence but only its projection \((z_{n,F})_n\) to the subspace \(V(F)\) has to be inside the cone \(K_F\), it is easy to check that all requirements are fulfilled for $F_2$  with $p = 0$. Hence we get the convergence \(\psi_{z_n} \lora h_{E_2,0}\), which is the same limit as for the sequence in the first example with $c = 0$. 
\end{ex}

\begin{figure}[h!]
      \includegraphics{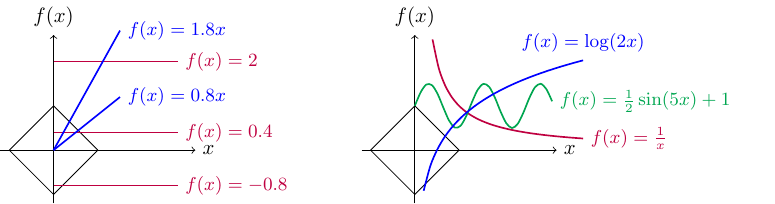}
      \caption{The functions for Examples \ref{ex:1} and \ref{ex:2} (\textsc{left}) and \ref{ex:3} to \ref{ex:5} (\textsc{right})}\label{fig:function_examples}
\end{figure}

\begin{ex} \label{ex:4}
      The next sequence $(z_n)_{n \in \NN}$ we consider is given by
      \[
	  z_n = \left(n, \frac{1}{2} \sin(5n) + 1 \right) \in \RR^2. 
      \]
      This sequence lies completely in $K_{F_1}$, but as the $y$-value is bounded, property \((ii)\) about the distance to the relative boundary is not satisfied with respect to \(F_1\). If we choose $F_2$ instead, this property is satisfied but we are not able to find an appropriate constant $p \in V(F_2)^\bot \simeq \RR$ to fulfill the convergence of the orthogonal sequence \(\left(z^{F_2}\right)_n\) (i.e. property \((iii)\)). As these two faces are the only reasonable choices, we conclude that $\psi_{z_n}$ does not converge in this case. This also turns out when doing the calculation explicitly. 
\end{ex}

\begin{rem}
	One could guess that an easier condition for finding the appropriate face $F$ is to look at the limiting direction $\left(\frac{z_n}{\lVert z_n \rVert}\right)_n$ and to require it to be in $F$. The next example shows that this expectation is too simplistic. 
\end{rem}

\begin{ex} \label{ex:5}
       For $n \in \NN$ take the sequence \((z_n)_{n \in \NN}\) with  
      \[
	  		z_n = \big(n, \log(2n)\big) \in \RR^2.
      \]
      Then we have $\frac{z_n}{\lVert z_n \rVert} \lora (1,0) \in F_2$, thus following the previous remark, we first try $F = F_2$. By considering property \((ii)\) we arrive at a problem, because the sequence $\left(z_n^{F_2}\right)_n = (0, \log(2n))_n \subset \{0\} \times \RR$ does not converge. If we take $F = F_1$ instead, the divergence of the second component guaranties us unbounded distance to the relative boundary (i.e. property \((ii)\)). As \(F_1\) is a facet, the projection is the identity and \(V(F_1)^\bot = \{0\}\). Thus all requirements are fulfilled with respect to \(F_1\) and we conclude $\psi_{z_n} \lora h_{E_1}$, independent of $p$ as explained before in Example \ref{ex:2}.  
\end{ex}

\begin{rem}
      We have seen in the examples above that it is not enough to consider the direction of a sequence to determine the right face \(F\) of \(B\). The easiest examples to consider are sequences following straight lines and they are also suited to show the general converging behavior: all sequences in a regular direction\footnote{A sequence goes in a regular direction if it follows a line parallel to a line through the origin and within the interior of the cone over a facet.} collapse and converge to the horofunction associated to the dual vertex, independent of any translation or direction. For a sequence in  a singular direction associated to a lower dimensional face $F$ we have the same collapsing behavior for the $z_{n,F}$-part and a blowing-up in the orthogonal direction $V(F)^\bot$, where each point $p \in \Fbot$ leads to a different boundary point $h_{E,p}$.  
\end{rem}


\section{Proof of Theorem \ref{thm:homeo}} \label{sec:proof_of_thm}

In this last part of this paper we prove Theorem \ref{thm:homeo}, which yields an explicit homeomorphism between the dual unit ball \(\Bo\) and the compactification \(\xhor\) that respects the stratification of the boundary of \(\xhor\) through the facial structure of \(\Bo\). This theorem thereby answers the question posed by by Kapovich and Leeb in \cite[Quest. 6.18]{kl}.

To construct the homeomorphism we need a map $m^C$ that maps a finite-dimensional vector space to the interior of a convex polytope $C$ of the same dimension. The structure of the map \(m^C\) is motivated by the moment map known from the theory of toric varieties. See for example \cite[\S 4.2]{fu} for a description of it. Although we do not have a Lie group acting on a toric variety here, the map is the same. Up to some signs which come from the definition of the dual unit ball, the same result as Theorem \ref{thm:mc} can be found in \cite[p. 82]{fu} but with a different proof. The moment map was also used to realize the closure of a flat in the Satake compactifications as bounded polytopes in \cite{ji}. 

\subsection{The map $m^C$} \label{sec:m^c}

 We define the map $m^C$ on the real vector space $\RR^k$ and its dual $\left(\RR^{k}\right)^*$ in order to be able to use methods from analysis for the proof\footnote{By choosing a basis and identifying \(X\) with \(\RR^m\), Theorem \ref{thm:mc} also holds for $m^C$ defined on $X$ and $C \subset X^*$.}. Let $C \subset \left(\RR^{k}\right)^*$ be a $k$-dimensional polytope with vertices $\{c_1, \ldots, c_r\}$. Set
\begin{equation*} \label{eq:mc}
      \begin{aligned} 
	m^C: \RR^k &\lora \inte(C),\\
	x &\longmapsto \sum_{i = 1}^r \frac{e^{-\langle c_i  | x \rangle}}{\sum_{k =1 }^r e^{-\langle c_k  | x \rangle}} c_i. 
      \end{aligned}
\end{equation*}

Let $x \in \RR^k$ be an arbitrary point. As no summand $e^{-\langle c_i | x \rangle}$ in the numerator can be zero, $m^C(x)$ is a proper convex combination of all vertices of $C$ and lies therefore in its interior. It is obvious that the map is continuous and twice differentiable. Bijectivity of the map will be shown later in Theorem \ref{thm:mc}.

As already done in the proof of Lemma \ref{lem:pairinginfty} we want to use $C^\circ$ as the unit ball of a subspace. Since $C$ does not necessarily contain the origin in its interior, $(C^\circ)^\circ$ is not $C$ anymore, which was a strong assumption in the duality between faces of \(B\) and of \(\Bo\). Thus we can not take $C^\circ$ as the unit ball of a subspace, but we have to consider a translated version \(C_s\) of \(C\) whose interior contains the origin, and then take the dual of this set. The following lemma shows that the map $m^C$ behaves well under such a shifting.

\begin{lem} \label{lem:mcshift}
	Let $C_s = C + s \subset (\RR^k)^*$ be the convex polyhedral set obtained by shifting $C$ by an element $s \in \left(\RR^k\right)^*$. Then for all $x \in \RR^k$
	\begin{align*}
		m^{C_s}(x) = m^C(x) + s.
	\end{align*}
\end{lem}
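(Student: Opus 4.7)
The proof is a direct computation, and the plan is simply to unwind the definition of $m^{C_s}$ and exploit a cancellation in the exponential weights.

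First, I would note that if $C = \conv\{c_1, \ldots, c_r\}$, then $C_s = C + s$ is the convex hull of the shifted vertices $\{c_1 + s, \ldots, c_r + s\}$. Substituting these into the defining formula gives
\begin{equation*}
    m^{C_s}(x) = \sum_{i=1}^r \frac{e^{-\langle c_i + s \,|\, x\rangle}}{\sum_{k=1}^r e^{-\langle c_k + s \,|\, x\rangle}}\,(c_i + s).
\end{equation*}
The key observation is that $e^{-\langle c_i + s \,|\, x\rangle} = e^{-\langle s \,|\, x\rangle}\, e^{-\langle c_i \,|\, x\rangle}$, so the global factor $e^{-\langle s \,|\, x\rangle}$ cancels between numerator and denominator. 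Writing $w_i(x) \coloneqq e^{-\langle c_i \,|\, x\rangle}/\sum_k e^{-\langle c_k \,|\, x\rangle}$ for the coefficients appearing in $m^C(x)$, this shows that the weights of $m^{C_s}$ at $x$ are exactly the same $w_i(x)$.

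Next, since these weights form a probability distribution, i.e.\ $\sum_{i=1}^r w_i(x) = 1$, I would split the sum as
\begin{equation*}
    m^{C_s}(x) = \sum_{i=1}^r w_i(x)\,c_i + \Bigl(\sum_{i=1}^r w_i(x)\Bigr)s = m^C(x) + s,
\end{equation*}
which is the claimed identity. No obstacle is expected here; the content of the lemma is precisely the translation-equivariance of the softmax-weighted barycenter construction, and the only thing to verify is the cancellation of the common exponential factor. The lemma will later be used to reduce statements about $m^{C^\circ}$ for a polytope $C$ not centered at the origin to the case in which the polar is well-behaved as a unit ball.
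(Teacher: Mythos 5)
Your computation is correct and is essentially identical to the paper's proof: both substitute the shifted vertices $c_i+s$ into the defining formula, cancel the common factor $e^{-\langle s\,|\,x\rangle}$, and split the sum using the fact that the weights sum to $1$. Nothing to add.
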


\begin{proof}
	Let $x \in \RR^k$ be arbitrary. Then, as $C_s = \conv\{c_j + s | j = 1, \ldots, r \}$, we have
	\begin{align*}
		m^{C_s}(x) &= \sum_{i = 1}^r \frac{e^{-\langle c_i + s | x \rangle}}{\sum_{k =1 }^r e^{-\langle c_k + s | x \rangle}} (c_i + s) \\
		&= \sum_{i = 1}^r \frac{e^{-\langle c_i | x \rangle - \langle s | x \rangle}}{\sum_{k =1 }^r e^{-\langle c_k | x \rangle - \langle s | x \rangle}} (c_i + s) \\
		&= \sum_{i = 1}^r \frac{e^{-\langle c_i | x \rangle}}{\sum_{k =1 }^r e^{-\langle c_k | x \rangle}} c_i + \sum_{i = 1}^r \frac{e^{-\langle c_i | x \rangle}}{\sum_{k =1 }^r e^{-\langle c_k | x \rangle}} s\\\
		&= m^C(x) + s. \qedhere
	\end{align*}
\end{proof}

To prove injectivity of the map \(m^C\) we need some knowledge about convex and concave functions, which we will now briefly repeat. More details can be found for example in \cite[\S 3.1]{BV}. 

\begin{defi}
 A function \(f: D \subset \RR^k \lora \RR\) is called \emph{convex} if \(D\) is a convex set and if 
 \[
    f(\lambda x + (1-\lambda)y) \leq \lambda f(x) + (1-\lambda)f(y),
 \]
    for all \(x,y \in D\) and \(0 \leq \lambda \leq 1\). 
    
    A function \(f\) is called \emph{strictly convex}, if we have \(<\) in the above inequality whenever \(x \neq y\) and \(0 < \lambda < 1\). 
    
    A function \(f\) is called \emph{(strictly) concave}, if \(-f\) is (strictly) convex. 
\end{defi}

\begin{lem} \cite[\S 3.1]{BV} \label{lem:convex_properties}
    Let \(f:D \subset \RR^k \lora \RR\) be a function and \(D \subset \RR^k\) an open convex set. Then we have the following characterizations for concavity:
    \begin{enumerate}
     \item First-order condition: Assume \(f\) is differentiable\footnote{Hereby we mean that the gradient \(\nabla f\) exists at every point in \(D\).}. Then \(f\) is strictly concave if and only if 
     \[
        f(y) < f(x) + \nabla f(x)^T (y-x).
     \]
     \item Second-order condition: Assume that \(f\) is twice differentiable\footnote{Hereby we mean that the Hessian \(\nabla^2 f\) exists at every point in \(D\).}. Then \(f\) is strictly concave if and only if the Hessian of \(f\) is negative definite, that is, for all \(x \in D\) we have
     \[
        \nabla^2 f(x) \leq 0.
     \]
    \end{enumerate}
\end{lem}

\begin{lem} \label{lem:derivative_mC_neg_def}
    The first derivative of \(m^C\) is negative definite at every \(x \in \RR^k\). 
\end{lem}

\begin{proof}
     Recall that 
	  \[
	  	m^C(x) = \frac{\sum_{i = 1}^r e^{-\langle c_i | x \rangle} c_i}{\sum_{k = 1}^r e^{-\langle c_k | x \rangle}} \in \left(\RR^k\right)^*
	  \] 
	  for all $x \in \RR^k$.  We use the notation that an upper index denotes the corresponding component of the vector. Then 
	  
	  \begin{align*}
	  	\frac{\del \left(m^C\right)^\alpha}{\del x^\beta} (x) 
	 	 &= \frac{1}{(\sum_{j = 1}^r e^{-\langle c_j | x \rangle})^2} \left [ \sum_{i,j= 1}^r c_i^\alpha c_j^\beta e^{-\langle c_i + c_j|x \rangle} - \sum_{i,j = 1}^r c_i^\alpha c_i^\beta e^{-\langle c_i + c_j|x \rangle} \right ]\\
	  	&= \frac{1}{(\sum_{j = 1}^r e^{-\langle c_j | x \rangle})^2} \sum_{i < j } e^{-\langle c_i + c_j | x \rangle} \left( c_i^\alpha c_j^\beta + c_j^\alpha c_i^\beta - c_i^\alpha c_i^\beta - c_j^\beta c_j^\alpha\right) \\
	  	&= \frac{-1}{(\sum_{j = 1}^r e^{-\langle c_j | x \rangle})^2} \sum_{i < j } e^{-\langle c_i + c_j | x \rangle} \left( c_i^\alpha - c_j^\alpha\right)\left(c_i^\beta - c_j^\beta\right). 
	  \end{align*}
	  
	  To show that the derivative of the map $m^C$ is negative definite, let $a = (a^1 \ldots a^k)^T \in \RR^k$ be some arbitrary vector. We compute
	  
	  \begin{align*}	  
	  	(a^1 \ldots a^k)& 
	  	\begin{pmatrix}
	  		\frac{\del \left(m^C\right)^1}{\del x^1} (x) & \cdots & \frac{\del \left(m^C\right)^1}{\del x^k} (x) \\
	  		\vdots & \ddots & \vdots \\
	  		\frac{\del \left(m^C\right)^k}{\del x^1} (x) & \cdots & \frac{\del \left(m^C\right)^k}{\del x^k} (x)
	  	\end{pmatrix}
	  	\begin{pmatrix}
	  		a^1 \\
	  		\vdots \\
	  		a^k 
	  	\end{pmatrix} 
	  	= \sum_{\alpha, \beta = 1}^k a^\alpha a^\beta \frac{\del \left(m^C\right)^\alpha}{\del x^\beta} (x)\\
	  	&= \frac{-1}{(\sum_j e^{-\langle c_j | x \rangle})^2} \sum_{\alpha, \beta} a^\alpha a^\beta \left [ \sum_{i <j} e^{-\langle c_i + c_j | x \rangle} \left(c_i^\alpha - c_j^\alpha\right)\left(c_i^\beta - c_j^\beta\right) \right] \\
	  	&= \frac{-1}{(\sum_j e^{-\langle c_j | x \rangle})^2} \sum_{i <j} e^{-\langle c_i + c_j | x \rangle} \left[ \sum_{\alpha, \beta} a^\alpha  \left(c_i^\alpha - c_j^\alpha\right) a^\beta \left(c_i^\beta - c_j^\beta\right) \right] \\
	  	&= \frac{-1}{(\sum_j e^{-\langle c_j | x \rangle})^2} \sum_{i <j} e^{-\langle c_i + c_j | x \rangle} \left (\sum_{\alpha} a^\alpha \left(c_i^\alpha - c_j^\alpha\right) \right)^2 \\
	  	&= \frac{-1}{(\sum_j e^{-\langle c_j | x \rangle})^2} \sum_{i <j} e^{-\langle c_i + c_j | x \rangle} \left ( \langle a | c_i - c_j \rangle \right)^2 < 0. 
	  \end{align*}
	  
	  As the vector \(a\) was arbitrary, negative definiteness is shown. 
\end{proof}

\begin{thm} \label{thm:mc}
	The map $m^C$ defined above is bijective. 
\end{thm}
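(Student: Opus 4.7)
The plan is to study $m^C$ via a convex potential, namely the log-sum-exp function
\[
\varphi : \R^m \lora \R, \qquad \varphi(x) = \log \sum_{i=1}^r e^{-\langle c_i | x\rangle}.
\]
A direct computation shows that $\nabla \varphi(x) = -m^C(x)$ (the partial $\partial_j \varphi$ produces exactly the convex-combination formula with a minus sign), and in particular $m^C$ is smooth, hence continuous. The image clearly lies in $\inte(C)$: the coefficients $\lambda_i(x) = e^{-\langle c_i|x\rangle}/\sum_k e^{-\langle c_k|x\rangle}$ are strictly positive and sum to $1$, so $m^C(x)$ is a strict convex combination of all vertices of $C$, placing it in the relative interior of $C$, which equals $\inte(C)$ since $\dim C = m$.

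For injectivity, I would compute the Hessian
\[
D^2 \varphi(x) = \sum_i \lambda_i(x)\, c_i c_i^{\!\top} - \Bigl(\sum_i \lambda_i(x) c_i\Bigr)\!\Bigl(\sum_i \lambda_i(x) c_i\Bigr)^{\!\top},
\]
which is the covariance matrix of the probability distribution on $\{c_1,\dots,c_r\}$ with weights $\lambda_i(x)$. Since $C$ is $m$-dimensional, the vertices $c_1,\dots,c_r$ affinely span $(\R^m)^*$, hence span an $m$-dimensional space after centering at any weighted mean; thus the covariance matrix is strictly positive definite. This makes $\varphi$ strictly convex, so $\nabla\varphi = -m^C$ is injective.

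For surjectivity, fix $y \in \inte(C)$ and consider the strictly convex function
\[
F_y(x) \coloneqq \varphi(x) + \langle y | x \rangle, \qquad \nabla F_y(x) = -m^C(x) + y.
\]
A zero of $\nabla F_y$ is exactly a preimage of $y$ under $m^C$, and strict convexity of $F_y$ forces uniqueness of any critical point and identifies it with the global minimum. The key is to show $F_y$ is coercive. Along a ray $x = tu$ with $\lVert u\rVert = 1$, one has $\varphi(tu) = -t\min_i \langle c_i|u\rangle + O(\log t)$ as $t \to \infty$, so
\[
F_y(tu) = t\bigl(\langle y | u \rangle - \min_i \langle c_i | u\rangle\bigr) + O(\log t).
\]
Because $y \in \inte(C)$, for every direction $u$ we have $\langle y | u \rangle > \min_{c \in C}\langle c | u\rangle = \min_i \langle c_i | u\rangle$, and by compactness of the unit sphere the gap is uniformly bounded below. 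This gives $F_y(x) \to +\infty$ as $\lVert x \rVert \to \infty$, so $F_y$ attains its minimum, producing the required preimage.

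The main obstacle I expect is the coercivity estimate, and specifically the uniformity of the strict inequality $\langle y | u\rangle > \min_i\langle c_i | u\rangle$ over all unit directions $u$; this is really a standard separation fact for interior points of a polytope, but it must be invoked cleanly (together with a uniform control of the $O(\log t)$ remainder) to push $F_y$ to infinity and apply the existence-of-minimum argument. Everything else reduces to routine verifications about strictly convex smooth functions and their gradients.
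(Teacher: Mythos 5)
Your argument is correct, and your treatment of surjectivity is genuinely different from the paper's. The paper shows the Jacobian of $m^C$ is negative definite, invokes the Inverse Function Theorem to get that the image is open in $\inte(C)$, and then proves the image is also closed by a sequence argument that distinguishes bounded and unbounded preimage sequences; the unbounded case is handled with Lemma \ref{lem:subsequence} and Lemma \ref{lem:pairinginfty}, which force the image values to escape to a proper face of $C$. You instead fix $y\in\inte(C)$, perturb the potential to $F_y(x)=\varphi(x)+\langle y\,|\,x\rangle$, and observe that a critical point of $F_y$ is exactly a preimage of $y$; existence of a critical point follows from coercivity, which in turn follows from the uniform strict separation $\min_{\lVert u\rVert=1}\big(\langle y\,|\,u\rangle-\min_i\langle c_i\,|\,u\rangle\big)>0$ valid precisely because $y$ is interior to $C$. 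That separation is the standard support-function characterization of interior points of a polytope, the gap is continuous in $u$, and the error term in your ray estimate is in fact $O(1)$ (bounded by $\log r$), so the coercivity is clean and you do not even need the sequence machinery. For injectivity you use positive definiteness of the Hessian (covariance of the weighted vertex distribution) rather than the paper's H\"older-inequality route to strict concavity, but these are equivalent and the paper computes the same Hessian anyway for the IFT step. Net effect: your proof of surjectivity is shorter and self-contained, at the cost of not reusing the cone-projection lemmas the paper already had on hand; the paper's route is arguably more in the spirit of its later convergence arguments.
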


\begin{proof}
	 To show injectivity, define the function 
	  \begin{align*}
	        f: \RR^k &\lora \RR, \\
	        x &\longmapsto -\ln\left(\sum_{i = 1}^r e^{-\langle c_i | x \rangle}\right). 
	  \end{align*}

	  Then  $m^C= \nabla f$ is the gradient of $f$. By Lemma \ref{lem:derivative_mC_neg_def}, the Hessian \(\nabla^2 f = \nabla m^C\) of \(f\) is negative definite and therefore $f$ is strictly concave by the second-order condition for concavity given in Lemma \ref{lem:convex_properties}.  
	  
	  Let \(x,y \in \RR^k\) be given with \(x \neq y\). We use the first-order condition for concavity (Lemma \ref{lem:convex_properties}) twice to get
	  \begin{align*}
	  	f(x) &< f(y) + \nabla f(y)^T  (x - y)  \\
	  	&< f(x) +  \nabla f(x)^T  (y - x)  +  \nabla f(y)^T (x - y). 
	  \end{align*}
	  Consequently, as $\nabla f(x) = m^C(x)$, we have $\left(m^C(x) - m^C(y)\right)^T (y - x) > 0$. Therefore $m^C(x) \neq m^C(y)$ for all $x \neq y \in \RR^k$ and  injectivity is shown. \\
	  
	  Let us now show surjectivity. As the derivative of $m^C$ at every point is a negative definite matrix (Lemma \ref{lem:derivative_mC_neg_def}), we know by the Inverse Function Theorem that $m^C$ is a local diffeomorphism and that its image is open in $\inte(C)$. It remains to show that the image is also closed. Assume that the image is open but not closed and take a point on the boundary of the image which lies in the interior of $C$, say $y \in \del m^C(\RR^k) \cap \inte(C)$. Thus, we can find a sequence $(y_n)_{n \in \NN} \subset m^C(\RR^k)$ converging to $y$. Let $(x_n)_{n \in \NN} \subset \RR^k$ be the sequence of preimages with \(m^C(x_n) = y_n\) for all \(n \in \NN\). Now there are two cases to distinguish. 
	
    If the sequence diverges, $x_n \lora \infty$, we can find a subsequence, also denoted by $(x_n)_{n \in \NN}$, which fulfills all properties of Lemma \ref{lem:subsequence} with respect to $C^\circ$. As we are only interested in limits, we assume by Lemma \ref{lem:mcshift} that $C$ contains the origin in its interior. Let $F \subset C^\circ$ be the face determined by the converging condition in Lemma \ref{lem:subsequence} and $E \subset C$ its dual. By Lemma \ref{lem:pairinginfty} and the third property of Lemma \ref{lem:subsequence}, we see that for an arbitrary vertex $c_E$ of $E$ we get
	
	\begin{align} \label{eq:SEremains_1}
		\pairr{c_E - c_j}{x_{n,F}} &\lora 
			\begin{cases}
				 0 &\text{ if } c_j \in E\\
				-\infty &\text{ if } c_j \notin E 
			\end{cases}
	\end{align}
	
	and 
	
	\begin{align*}
	        \pairr{c_E - c_j}{x_n^F} &\text{ is bounded for all \(j\).}
	\end{align*}
	
	Only those summands of  
	
	\begin{align} \label{eq:SEremains_2}
		m^C(x_n) = \sum_{i = 1}^r \frac{e^{-\langle c_i | x_n \rangle}}{\sum_{k =1 }^r e^{-\langle c_k  | x_n \rangle}} c_i = \sum_{i = 1}^r \frac{e^{\langle c_E - c_i  | x_n \rangle}}{\sum_{k =1 }^r e^{\langle c_E - c_k  | x_n \rangle}} c_i
	\end{align}
	
	 with $c_i \in E$ remain in the limit. Consequently, the limit $\lim_{n \ra \infty} m^C(x_n)$ of the subsequence is a convex combination of all those vertices spanning the face $E$ and thus lies in $E$. Since $E$ is in the boundary of $C$, this is a contradiction to the assumption that the image is open but not closed. 
	 
    It remains to consider the case where $(x_n)_n$ is bounded. Here again we  find a subsequence $(x_{n_k})_k$ converging to some point $x \in \RR^k$. By continuity of $m^C$ and uniqueness of limits we conclude that $y = m^C(x)$ lies in the image of $m^C$. As $y$ was some arbitrary boundary point, the image of $m^C$ is also closed in $C$. 
 
    All together $m^C(\RR^k) = C$ and the map \(m^C\) is surjective.
\end{proof}

\subsection{The actual proof of Theorem \ref{thm:homeo}}

Before we prove Theorem \ref{thm:homeo} we state a topological lemma that significantly simplifies the proof of continuity\footnote{We thank the unknown referee for the idea to include this lemma and its proof.}. 

\begin{lem} \label{lem:topological_convergence}
	Let \(X\) and \(Y\) be first countable Hausdorff topological spaces and let \(A \subset X\) be an open and dense subset. Let \(f: X \lora Y\) be a map satisfying the following two properties:
	\begin{itemize}
	 \item The restriction \(f:A \lora Y\) is continuous.
	 \item For every point \(x \in X \setminus A\) and every sequence \((a_n)_{n \in \NN} \subset A\) converging to \(x\) we have \(\lim_{n \ra \infty} f(a_n) = f(x)\).
	\end{itemize}
	Then \(f\) is continuous.
\end{lem}

\begin{proof}
	Given a point \(x \in X \) and a sequence \((x_n)_{n \in \NN} \subset X\) converging to \(x\) we have to show that \(\lim_{n \ra \infty} f(x_n) = f(x)\). We already know that this holds true for points and sequences in \(A\) as well as for points \(x \in X \setminus A\) in the complement and sequences in \(A\) converging to \(x\). Thus, it suffices to show convergence for points and sequences in the complement, i.e. for any point \(x \in X \setminus A\) and a sequence \((x_n)_{n \in \NN} \subset X \setminus A\) converging to \(x\).
	
	We will prove this by contradiction. To do so suppose we had a point \(x \in X \setminus A\) and a sequence \((x_n)_{n} \subset X \setminus A\) converging to \(x\), for which we find a subsequence \((x_n)_n \subset X \setminus A\) and an open neighborhood \(U \subset Y \) of \(y \coloneqq f(x)\) such that the sequence \((y_n)_n \coloneqq (f(x_n))_n\) lies completely outside of \(U\).

	As \(A \subset X\) is dense there is a sequence \((a_k^n)_{k \in \NN} \subset A\) converging to \(x_n\) for every \(x_n \in X \setminus A\), hence we also have \(f(a_k^n) \lora f(x_n) = y_n\) for \(k \ra \infty\) by the second assumption of this lemma. As \(y_n \notin U\) for each \(n \in \NN\) and since \(U\) is open, we get \(f(a_k^n) \notin U\) for all but finitely many \(k\)'s. Removing the corresponding finitely many points from the sequences \((a_k^n)_k\) for each \(n\) yield subsequences \((a_k^n)_k \subset A\) with \(f(a_k^n) \notin U\) for all \(n, k \in \NN\). By a diagonal argument we get a sequence \((a_m)_{m \in \NN} \subset A\) consisting of some members of the sequences \((a_k^n)_k\) (for \(n \in \NN\)), and converging to \(x\). By the second assumption of this lemma, we conclude \(f(a_m) \lora f(x) = y \in U\) for \(m \ra \infty\), but this contradicts the fact that \(f(a_m) \notin U\) for all \(m \in \NN\).
\end{proof}

Let us now state and prove the main theorem of this part of the paper, the existence of a homeomorphism between \(\Bo\) and \(\xhor\).

\begin{thm}\label{thm:homeo}
	Let $(X, \normm)$ be a finite-dimensional normed space with a polyhedral norm. Let $B  \subset X$ be the unit ball associated to $\normm$ and $\Bo \subset X^*$ its dual. Then the horofunction compactification $\overline{X}^{hor}$ is homeomorphic to $\Bo$ via the map
	\begin{align*}
		m: \xhor \lora \Bo, \hspace{0.8cm} \begin{cases} \hfill	X \ni x  &\longmapsto m^{\Bo}(x), \\
			\delhor X \ni h_{E,p}   &\longmapsto m^E(p). \end{cases} 
	\end{align*}	
\end{thm}

\begin{proof}
      The proof is structured as follows: After showing that the map \(m\) is well-defined and bijective we prove continuity. As both spaces involved are Hausdorff and compact, this is enough to conclude that the map is a homeomorphism.

      Let $E$ be a face of $\Bo$ and $F = E^\circ \subset B$ its dual face. The map \(m^C\), as defined above, maps \(\RR^k\) into the interior of the \(k\)-dimensional convex set \(C \subset \left(\RR^k\right)^*\). Therefore we have \(m^{\Bo}(X) \subset \inte(\Bo)\) and the mapping is continuous and bijective. 
      
      Let us now look at \(m^E\) for a proper face \(E \subset \Bo\). As \(E\) lives in a proper affine subspace of \(X^*\) and is not of full dimension, we have to show that \(m^E(p) \in E\) with \(p \in V(F)^\bot\). Let \(\Bo = \conv\{u_1, \ldots, u_r\}\) and denote by $E^{F^*} = \conv\left\{u_j^{F^*} | u_j \in E \right\}$ the orthogonal projection of $E$ to $\left(V(F)^\bot\right)^* \subset X^*$, where \(\left(V(F)^\bot\right)^*\) denotes the subspace in \(X^*\) dual to \(V(F)^\bot \subset X\) and the upper script \(^{F^*}\) denotes the orthogonal projection to it. Note that \(\left(V(F)^\bot\right)^*\) is the subspace of \(X^*\) which is parallel to \(\aff(E)\) and has the same dimension as \(\aff(E)\).  By the construction of the dual unit ball, $E^{F^*}$ is a top-dimensional convex polytope in the vector space $\left(\Fbot\right)^*$.
      
      Let us briefly only consider the vector space \(V(F)^\bot\) and its dual \(\left(V(F)^\bot\right)^*\). Since \(p \in V(F)^\bot\)  by the definition of horofunctions we have \(m^{E^{F^*}} \in \inte\left(E^{F^*}\right)\) and, by Theorem \ref{thm:mc}, \(m^{E^{F^*}}\) is a continuous and bijective map from $\Fbot$ to $\inte\left(E^{F^*}\right)$. 
      
      Back to  \(X^*\). We know by Lemma \ref{lem:samepairing} that \(E^{F^*}\) is a parallel translate of \(E\) in \(X^*\). Thus, there is a $t \in V(F)^* \subset X^*$ such that $E = E^{F^*} + t$ is a top-dimensional convex polytope in the affine space $\left(V(F)^\bot\right)^* + t = \aff(E)$. To see that \(m^E\) is onto let $y \in E$ be a point,  $y^{F^*} \in E^{F^*}$ its orthogonal projection to \(\left(V(F)^\bot\right)^*\). Let $x \in \Fbot$ be the preimage of $y^{F^*}$. Then      
      \begin{align*}
        m^E(x) = m^{E^{F^*}}(x) + t = y^{F^*} + t = y.  
      \end{align*}
      By Lemma \ref{lem:mcshift} we conclude that the map $m^E$ maps $V(F)^\bot$ into $\inte(E)$ and that it is also continuous and bijective. This concludes the proof of well-definedness and bijectivity of \(m\). 
	
      To prove continuity note that $\Bo$ is the finite union of the relative interiors of its faces and that the map \(m\) is composed of the maps \(m^E\) for exactly the same faces \(E\) of \(\Bo\). We already know that each map \(m^E\) is continuous. Thus it remains to show that the composition of \(m\) is compatible with continuity.
      
      Let $(z_n)_{n \in \NN} \subset X$ be a sequence that converges to a horofunction $h_{E,p}$. Then, by the third property of the characterization of sequences in Theorem \ref{thm:characterization}, we know that $z_n^F$ converges to $p \in V(F)^\bot$. 
      By the same calculation as already done in Equations (\ref{eq:SEremains_1}) and (\ref{eq:SEremains_2}) on page \pageref{eq:SEremains_1} we conclude that, for $n \lora \infty$,
      \begin{align*} \label{eq:mu(z_n)}
			m(z_n) \lora \frac{\sum_{u_j \in E} e^{-\langle u_j | p \rangle} u_j}{\sum_{u_k \in E} e^{- \langle u_k | p \rangle}} = m(h_{E,p}).
      \end{align*}      
      In other words, we have just shown that for a horofunction \( h_{E,p} \in \xhor\) and a sequence \((z_n)_{n \in \NN} \subset X\) converging to \(h_{E,p}\), we have
      \[
		\lim_{n \ra \infty} m(z_n) = m(h_{E,p}).
      \]
      As \(X \subset \xhor\) is a dense subset we conclude by Lemma \ref{lem:topological_convergence}  that our map \(m: \xhor \lora \Bo\) is continuous. \qedhere

\end{proof}

\bibliography{references}{}
\bibliographystyle{alpha}

\end{document}